\documentclass{article}

\usepackage[a4paper]{geometry}
\usepackage[utf8]{inputenc}
\usepackage[T1]{fontenc}
\usepackage{amsmath}
\usepackage{amssymb}
\usepackage{amsfonts}
\usepackage{amsthm}
\usepackage{mathtools}
\usepackage{theoremref}
\usepackage{graphicx}
\usepackage[justification=centering]{caption}
\usepackage{subcaption}
\usepackage[french, onelanguage, ruled, vlined]{algorithm2e}
\usepackage{pgfplots}
\usepackage[section]{placeins}
\usepackage{stackengine,scalerel}
\usepackage{titling}

\thanksmarkseries{arabic}

\newcommand\blfootnote[1]{%
  \begingroup
  \renewcommand\thefootnote{}\footnote{#1}%
  \addtocounter{footnote}{-1}%
  \endgroup
}

\graphicspath{{figures/}}

\theoremstyle{definition}
\newtheorem{definition}{Definition}[section]

\theoremstyle{plain}

\theoremstyle{plain}

\theoremstyle{plain}

\theoremstyle{plain}
\newtheorem{proposition}[definition]{Proposition}

\theoremstyle{plain}
\newtheorem{lemma}[definition]{Lemma}

\theoremstyle{remark}
\newtheorem*{remark}{Remark}

\usepackage{tikz}
\usepackage{subcaption}
\usepackage{forest}
\usepackage{pgfplots}
\usepackage{textcomp}
\usepackage[inline]{asymptote}
\usepackage{tikz-cd}
\usetikzlibrary{decorations.pathreplacing,decorations.markings}

\usetikzlibrary{positioning}
\usetikzlibrary{arrows.meta}
\tikzset{|/.tip={Bar[width=.8ex,round]}}

\usetikzlibrary{calc}

\tikzset{vertex/.style={circle,draw=black,fill=white,inner sep=1,minimum size=8pt}}
\tikzset{leg/.style={rectangle,sharp corners,scale=0.8,draw=black,fill=blue!25,inner sep=1,minimum size=12pt}}
\tikzset{subtree/.style={rectangle,draw=black,fill=white,minimum size=16pt}}
\tikzset{bits/.style={rectangle,fill=black!25,minimum size=12pt}}
\tikzset{every loop/.style={}}
\tikzset{marknode/.style={rectangle,sharp corners,fill=black,minimum size=4pt}}
\tikzset{label node/.style={rectangle,rounded corners,draw=black,fill=white}}
\tikzset{edge label/.style={fill=white, inner sep=0.2mm}}
\tikzset{halfedge label/.style={fill=white, inner sep=0.2mm, scale=0.8}}
\tikzset{small halfedge label/.style={fill=white, inner sep=0.2mm, scale=0.5}}

\tikzset{node/.style args={#1}{circle,draw=black,fill=#1,inner sep=1,minimum size=5pt}}
\tikzset{star/.style={fill=white, circle, inner sep=0.1}}

\tikzset{
  mid arrow/.style args={#1}{postaction={decorate,decoration={
        markings,
        mark=at position #1 with {\arrow{>}}
      }}},
  mid arrow/.default={0.5}
}

\tikzset{
    partial ellipse/.style args={#1:#2:#3}{
        insert path={+ (#1:#3) arc (#1:#2:#3)}
    }
}

\tikzset{
    partial circle/.style args={#1:#2:#3}{
        insert path={+ (#1:#3) arc (#1:#2:#3)}
    }
}

\tikzset{
  pics/fancode/.style args={#1,#2/#3;#4}{
    code={
      \coordinate (center) at (0, 0);
      \foreach \i in {#1,...,#2}{
        \begin{scope}[rotate={180+(\i-1)*360/#3}, transform shape, name prefix=\i]
          #4
        \end{scope}
      }
    }
  }
}

\tikzset{
  pics/blankpic/.style={
    code={
    }
  }
}

\tikzset{
  pics/nodepic/.style args={#1}{
    code={
      \node[node=#1] (node) at (0, 0) {};
    }
  }
}

\tikzset{
  pics/fanpic/.style args={#1-#2/#3;#4;#5}{
    code={
      \coordinate (center) at (0, 0);
      \foreach \i in {#1,...,#2}{
        \begin{scope}[rotate={180+\i*360/#3}, shift={(#4, 0)}]
          \pic (\i) at (0, 0) {#5};
        \end{scope}
      }
    }
  }
}


\usepackage{amsmath}
\usepackage{amssymb}
\usepackage{amsfonts}
\usepackage{amsthm}
\usepackage{mathtools}
\usepackage{theoremref}
\usepackage{graphicx}
\usepackage{subcaption}
\usepackage{faktor}
\usepackage{ifthen}
\usepackage{multirow}
\usepackage{float}
\restylefloat{table}
\usepackage[normalem]{ulem}

\newcommand{\ignore}[1]{}

\DeclareMathOperator*{\id}{id}

\DeclareMathOperator*{\Gal}{Gal}

\DeclareMathOperator*{\Crit}{Crit}

\DeclareMathOperator*{\Cart}{Cart}
\DeclareMathOperator*{\ord}{ord}
\DeclareMathOperator*{\odd}{odd}
\DeclareMathOperator*{\even}{even}

\newcommand\NN{\mathbb{N}}
\newcommand\ZZ{\mathbb{Z}}
\newcommand\ZZn[1]{\ZZ/#1\ZZ}
\newcommand\QQ{\mathbb{Q}}
\newcommand\QQb{\bar{\mathbb{Q}}}
\newcommand\CC{\mathbb{C}}
\newcommand\RR{\mathbb{R}}
\newcommand\PP{\mathbb{P}^1}

\newcommand\cD{\mathcal{D}}

\newcommand\cM{\mathcal{M}}
\newcommand\Sym{\mathfrak{S}}
\newcommand\AGG{\Gal(\QQb/\QQ)}
\newcommand\genx{\xi}
\newcommand\geny{\eta}

\newcommand\treep{\oplus}
\newcommand\treem{\ominus}

\title{Regular dessins with moduli fields of the form $\QQ(\zeta_p,\sqrt[p]{q})$}

\author{Nicolas Daire\thanks{Department of Mathematics, \'{E}cole Normale Supérieure, 45 rue d'Ulm, 75005 Paris, France,\newline (e-mail: \href{mailto:nicolas.daire@ens.psl.eu}{nicolas.daire@ens.psl.eu})}, Fumiharu Kato\thanks{Department of Mathematics, Tokyo Institute of Technology, 2-12-1 Ookayama, Meguro, Tokyo 152-8551, Japan, (e-mail: \href{mailto:bungen@math.titech.ac.jp}{bungen@math.titech.ac.jp})}, Yoshiaki Uchino\thanks{Department of Mathematics, Tokyo Institute of Technology, 2-12-1 Ookayama, Meguro, Tokyo 152-8551, Japan, (e-mail: \href{mailto:uchino.y.ab@m.titech.ac.jp}{uchino.y.ab@m.titech.ac.jp})}}
\date{}

\begin{document}

\maketitle

\blfootnote{2020 \textit{Mathematics Subject Classification}. \textup{Primary: 14H57, Secondary: 14H30}.}

\abstract{
  Gareth Jones asked during the 2014 SIGMAP conference for examples of regular dessins with nonabelian fields of moduli. In this paper, we first construct dessins whose moduli fields are nonabelian Galois extensions of the form $\QQ(\zeta_p,\sqrt[p]{q})$, where $p$ is an odd prime and $\zeta_p$ is a $p$th root of unity and $q\in\QQ$ is not a $p$th power, and we then show that their regular closures have the same moduli fields. Finally, in the special case $p=q=3$ we give another example of a regular dessin with moduli field $\QQ(\zeta_3,\sqrt[3]{3})$ of degree $2^{19}\cdot3^4$ and genus $14155777$.
}


\section{Introduction}

Grothendieck first coined the term {\em Dessin d'enfant} in {\em Esquisse d'un Programme} \cite{esquisse} to denote a connected bicolored graph embedded on a compact connected oriented topological surface. The study was motivated by the one to one correspondance between dessins d'enfant, the combinatorial data of the associated  cartographical group, and the geometric concept of coverings of $\PP$ by compact Riemann surfaces ramified at most over three points. Moreover, by Belyi's theorem any such covering is given the structure of an algebraic curve defined over a number field, therefore we obtain a natural action of the absolute Galois group $\AGG$ on the set of isomorphism classes of dessins. A lot of the interest for dessins stems from the fact that this action is faithful, providing a way to study the absolute Galois group through its action on the set of dessins. A particularly interesting family of dessins is that of regular dessins, characterized by the fact that their automorphism groups act transitively on their sets of edges, and the Galois action was proved to remain faithful when restricted to the subset of isomorphism classes of regular dessins \cite{Gonzalez-Diez-Jaikin-Zapirain}.

To any dessin we associate a number field called its moduli field, which is defined as the subfield of $\QQb$ fixed by the subgroup of $\AGG$ that fixes the dessin up to isomorphism. Conder, Jones, Streit and Wolfart noted in \cite{CJSW} that the moduli fields of all the examples of regular dessins known at the time were abelian Galois extensions of $\QQ$. Herradón constructed in \cite{Herradon} an explicit equation for a regular dessin whose moduli field $\QQ(\sqrt[3]{2})$ is not a Galois extension of $\QQ$, and Hidalgo later generalized his construction in \cite{Hidalgo} to produce regular dessins whose moduli fields are of the form $\QQ(\sqrt[p]{2})$ where $p$ is an odd prime number. However there is as of yet no known example of regular dessin whose moduli field is a nonabelian Galois extension of $\QQ$. This is the starting point of this paper, in which we will exhibit examples of regular dessins with moduli fields that are nonabelian Galois extensions of $\QQ$.

In the present paper, we begin by recalling the main definitions and results on dessins d'enfant. We will then expose constructions of regular dessins whose moduli fields are nonabelian Galois extensions of $\QQ$. We first exhibit dessins whose moduli fields are of the form $\QQ(\zeta_3,\sqrt[3]{q})$, where $\zeta_3$ is a primitive third root of unity and $q\in\QQ$ is not a third power, and show that the regular closures of these dessins possess the same moduli fields. We then generalize this construction to show that there exist regular dessins with moduli fields $\QQ(\zeta_p,\sqrt[p]{q})$, where $\zeta_p$ is a primitive $p$th root of unity and $q\in\QQ_{>0}$ is not a $p$th power. Finally, we give an example of a regular dessin with moduli field $\QQ(\zeta_3,\sqrt[3]{3})$ of degree 42467328 and genus 14155777.

\paragraph*{Aknowledgements}
The authors are grateful to Professor Jürgen Wolfart for valuable comments.

\bigskip
\paragraph*{Notations}
\begin{itemize}
\item $\Sym_{E}$: the group of self-bijections of the set $E$, similarly $\Sym_n$ is the group of permutations of a set of $n$ elements (we favor a right action, hence we write the product $\sigma\tau\coloneqq\tau\circ\sigma$)
\item $\Gal(E/F)$: the Galois group of $F$-automorphisms of $E$
\item $\zeta_k$: the $k$th primitive root of unity $\exp(\frac{2i\pi}{k})$
\item $F_2$: the free group of rank $2$ with generators $(\genx,\geny)$
\item $\Crit$: the set of critical values of a function
\end{itemize}

\section{Preliminaries on dessins d'enfant}

We refer the reader to existing expositions of the theory such as \cite{Guillot}, \cite{Lando-Zvonkin}, \cite{Jones-Wolfart} and \cite{Girondo-Gonzalez-Diez} for proofs of the presented facts and further details.
\bigskip

A {\em dessin d'enfant} is a connected bipartite graph embedded on a compact connected orientable topological surface, such that the complement of the graph is a disjoint union of 2-cells. Two such dessins are equivalent if there exists an orientation preserving homeomorphism between the underlying surfaces that induces an isomorphism between the embedded bipartite graphs.

A dessin is determined up to isomorphism by a pair $(C,\beta)$ where $C$ is a smooth algebraic curve and $\beta\colon C\to\PP$ is a meromorphic mapping ramified at most over $\{0,1,\infty\}$, and by Belyi's theorem we can further ask for $C$ and $\beta$ to both be defined over a number field. We call $(C,\beta)$ a {\em Belyi pair} and $\beta$ a {\em Belyi function}. The corresponding graph embedding on the underlying surface is recovered by pulling back the segment $[0,1]$ along $\beta$, we define black and white vertices as the preimages of $0$ and $1$ respectively, and the edges as the preimages of $]0,1[$.

By covering theory a dessin is also determined up to isomorphism by the {\em monodromy action} of the fundamental group of the complex projective line $\pi_1(\PP)$ on the fiber over the point $\frac{1}{2}$ which is identified to the set of edges of the dessin. The fundamental group $\pi_1(\PP)$ is isomorphic to the free group of rank two $F_2=\langle\genx,\geny\rangle$ with generators $\genx$ and $\geny$ which are two loops with base point $\frac{1}{2}$ and circling counter-clockwise around $0$ and $1$ respectively. The monodromy action of the generators $\genx$ and $\geny$ then corresponds to the product of the counter-clockwise cyclic permutation of the edges around black and white vertices respectively. We call {\em monodromy map} $M\colon F_2\to\Sym_E$ the map that associates to each element of $F_2$ the corresponding permutation of the set of edges, and we call {\em cartographic group} the image of the monodromy map, which is a transitive subgroup of the group of permutations of the set of edges.

\medskip
When the {\em automorphism group} of a dessin $\cD$ acts transitively on the set of edges, we say that $\cD$ is a {\em regular dessin}. When that is the case the cartographic group $G$ acts transitively and freely on the set of edges, the monodromy action is thus given by the canonical action of $G$ on itself. There is a natural bijection between regular dessins and finite groups generated by two distinguished elements $\genx$ and $\geny$ up to isomorphism. Two regular dessins determined by $G_1=\langle\genx_1,\geny_1\rangle$ and $G_2=\langle\genx_2,\geny_2\rangle$ respectively are isomorphic if and only if there exists an isomorphism between $G_1$ and $G_2$ that preserves the distinguished generators. Given a dessin $\cD$, there exists a unique regular dessin $\widetilde{\cD}$ with a morphism $\phi\colon\widetilde{\cD}\to\cD$ such that any morphism from a regular dessin to $\cD$ factors through $\phi$. We call $\widetilde{\cD}$ the {\em regular closure} of $\cD$. Moreover, there exists an isomorphism $\Cart(\widetilde{\cD})\cong\Cart(\cD)$ that preserves the distinguished generators. There exists a natural action of the absolute Galois group $\AGG$ on the set of isomorphism classes of dessins, we denote by $\cD^\sigma$ the action of an automorphism $\sigma$ on a dessin $\cD$, and this Galois action commutes with regular closure, i.e. we have $(\widetilde{\cD})^\sigma\cong\widetilde{(\cD^\sigma)}$.

\medskip
Given a dessin $\cD$, we say that a number field $k$ is a {\em field of definition} of $\cD$ if $\cD$ is isomorphic to a dessin defined over $k$. However there does not necessarily exist a smallest field of definition. We thus define the {\em moduli field} of a dessin $\cD$ as the subfield of $\QQb$ fixed by the subgroup of $\AGG$ constituted of the elements fixing $\cD$ up to isomorphism. The moduli field of a dessin is contained in all fields of definition but is not necessarily itself a field of definition, however it is the case in particular for regular dessins.

\section{Constructions of regular dessins with nonabelian moduli fields}
We are now ready to give examples of regular dessins whose moduli fields are nonabelian Galois extensions of $\QQ$. To do so, we will first exhibit dessins with such moduli fields, and then prove that their regular closures admit the same moduli fields.

\bigskip
Before proceeding with the examples, let us first present a classic family of Belyi polynomials that we will use in the following constructions. For positive integers $m,n\in\NN$ we define the polynomial $$B_{m,n}\coloneqq\frac{(m+n)^{m+n}}{m^mn^n}X^m(1-X)^n\in\QQ[X].$$ By computing the derivative $B_{m,n}'=\frac{(m+n)^{m+n}}{m^mn^n}X^{m-1}(1-X)^{n-1}(m-(m+n)X)$ we verify that $B_{m,n}:\PP\to\PP$ is a Belyi function that ramifies only at $0$, $1$, $\infty$ and $\frac{m}{m+n}$ with ramification indices $m$, $n$, $m+n$ and $2$ respectively, and $B_{m,n}(0)=0$, $B_{m,n}(1)=0$, $B_{m,n}(\infty)=\infty$ and $B_{m,n}(\frac{m}{m+n})=1$ (see Figure \ref{dessin-Bmn}).

\begin{figure}[H]
  \centering
  \includegraphics[height=2cm]{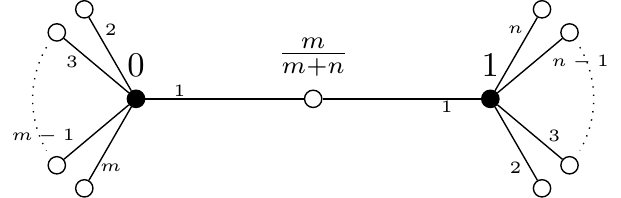}
  \caption{Dessin corresponding to the Belyi pair $(\PP,B_{m,n})$.}
  \label{dessin-Bmn}
\end{figure}

\subsection{Regular dessins with moduli fields of the form $\QQ(\zeta_3,\sqrt[3]{q})$}\label{ex1}

Let $q\in\QQ_{>0}$ be a positive rational number that is not a third power.

\medskip
Let $m,n\in\NN$ be coprime positive integers such that $\frac{27}{27+q^2}=\frac{m}{m+n}$, and let
\begin{align*}
  C&\colon y^2=x(x-(1-\zeta_3))(x-\sqrt[3]{q}),\\
  \beta&\colon C\to\PP,\;(x,y)\mapsto\frac{1}{27^mq^{2n}}(x^6+27)^m(q^2-x^6)^n.
\end{align*}

The function $\beta$ is given by the composition $\beta=\beta_1\circ\beta_0\circ\pi$ of the following maps.
\begin{enumerate}
\item $\pi\colon C\to\PP$ is the projection on the coordinate $x$, which is ramified over $\{0,1-\zeta_3,\sqrt[3]{q},\infty\}$.
\item $\beta_0\coloneqq X^6\in\QQ[X]$, $\Crit(\beta_0)=\{0\}$ so $\beta_1\circ\pi$ ramifies over $\{0,(1-\zeta_3)^6=-27,q^2,\infty\}$.
\item $\beta_1\coloneqq B_{m,n}(\frac{X+27}{q^2+27})$, so $\beta=\beta_1\circ\beta_0\circ\pi$ ramifies over $\{0,1,\infty\}$.
\end{enumerate}

The pair $(C,\beta)$ is thus a Belyi pair, and we call $\cD$ the corresponding dessin. The dessin $\cD$ is defined over $\QQ(\zeta_3,\sqrt[3]{q})$, so its moduli field is a subfield of $\QQ(\zeta_3,\sqrt[3]{q})$. By taking the regular closure we then obtain the inclusion of moduli fields $\cM(\widetilde{D})\subseteq\cM(D)\subseteq\QQ(\zeta_3,\sqrt[3]{q})$, and moreover $\widetilde{D}$ is regular so it is defined over $\cM(\widetilde{D})$. We shall prove that $\cM(\widetilde{\cD})$ is in fact exactly $\QQ(\zeta_3,\sqrt[3]{q})$, which is a nonabelian Galois extension of $\QQ$ with Galois group $$\Gal(\QQ(\zeta_3,\sqrt[3]{q})/\QQ)\cong\Sym_3.$$ To that end we must show that an automorphism $\sigma\in\AGG$ fixes $\widetilde{\cD}$ if and only if it fixes $\zeta_3$ and $\sqrt[3]{q}$, or equivalently that $\Gal(\QQ(\zeta_3,\sqrt[3]{q})/\QQ)$ acts freely on the orbit of $\widetilde{\cD}$.

  \medskip
  Let $\sigma\in\AGG$, the Galois conjugate $\cD^\sigma$ is given by the Belyi pair $(C^\sigma,\beta^\sigma)$, where $$C^\sigma\colon y^2=x(x-(1-\sigma(\zeta_3)))(x-\sigma(\sqrt[3]{q})),$$ and $\beta^\sigma$ has the same expression as $\beta$ because all of its coefficients are rational. The orbit of the pair $(\zeta_3,\sqrt[3]{q})$ by $\AGG$ is $\{\zeta_3^i,\zeta_3^j\sqrt[3]{q}\}_{1\leq i\leq2,0\leq j\leq2}$. Elliptic curves given by equations of the form $y^2=(x-a)(x-b)(x-c)$ are isomorphic if and only if the cross-ratios of the tuples $(a,b,c,\infty)$ coincide. We verify that the cross-ratios are all distinct, so the orbit of $\cD$ is given by the six dessins $\cD^\sigma$ for $\sigma\in\Gal(\QQ(\zeta_3,\sqrt[3]{q})/\QQ)$. As a consequence $\cM(\cD)=\QQ(\zeta_3,\sqrt[3]{q})$. To prove that the regular closures $\widetilde{\cD^\sigma}$ constituting the orbit of $\widetilde{\cD}$ are also non isomorphic, we must first draw the dessins $\cD^\sigma$ to compute their cartographic groups.

\medskip
Let us first draw the dessin $\cD_0$ corresponding to the Belyi pair $(\PP,\beta_1\circ\beta_0)$ (see Figure \ref{beta-ex1}). The dessin $\cD_0$ is defined over $\QQ$, so the dessins $\cD^\sigma$ in the orbit are then obtained by lifting $\cD_0$ to the curves $C^\sigma$. To simplify the graphical representations of the dessins, we will use the notation in Figure \ref{simply} for consectutive edges incident to a vertex.

\begin{figure}[H]
  \centering
  \includegraphics[height=2.5cm]{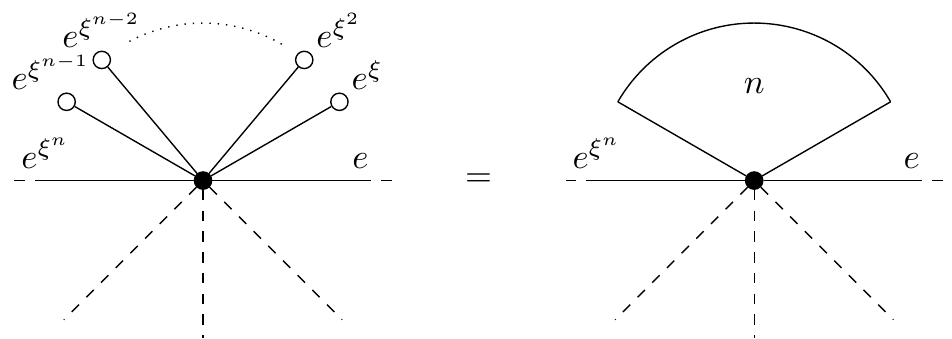}
  \caption{Notation for consecutive edges.}
  \label{simply}
\end{figure}

\begin{figure}[H]
  \centering
  \includegraphics[width=6cm]{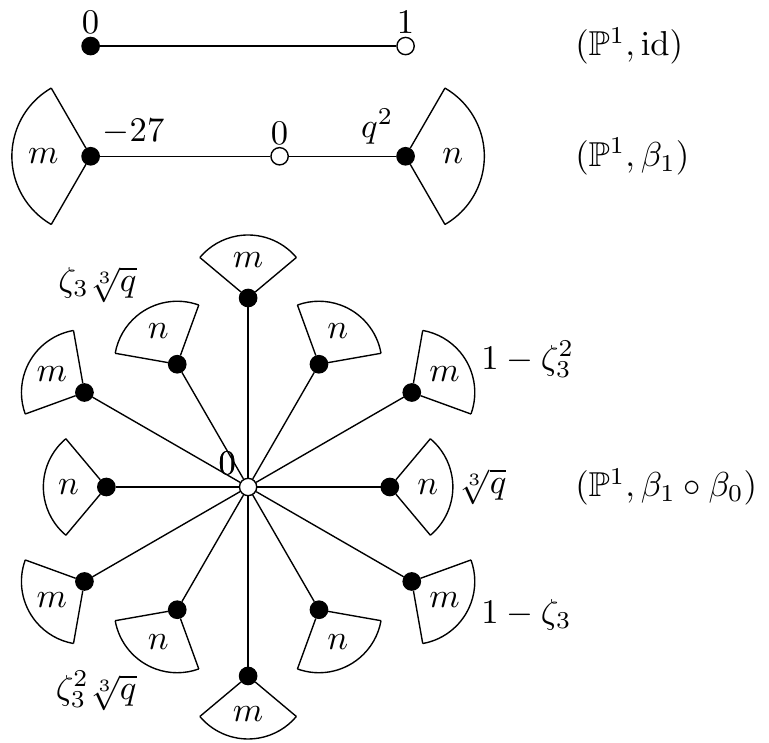}
  \caption{Construction of $\cD_0$.}
  \label{beta-ex1}
\end{figure}

The dessins $\cD_1,\dots,\cD_6$ conjugate to $\cD$ are embedded on a torus, so in the representations in Figure \ref{ex1-dessins} we will identify the outermost edges on opposite sides.

\begin{figure}[H]
 \begin{subfigure}{0.5\hsize}
  \begin{center}
   \includegraphics[height=0.9\hsize]{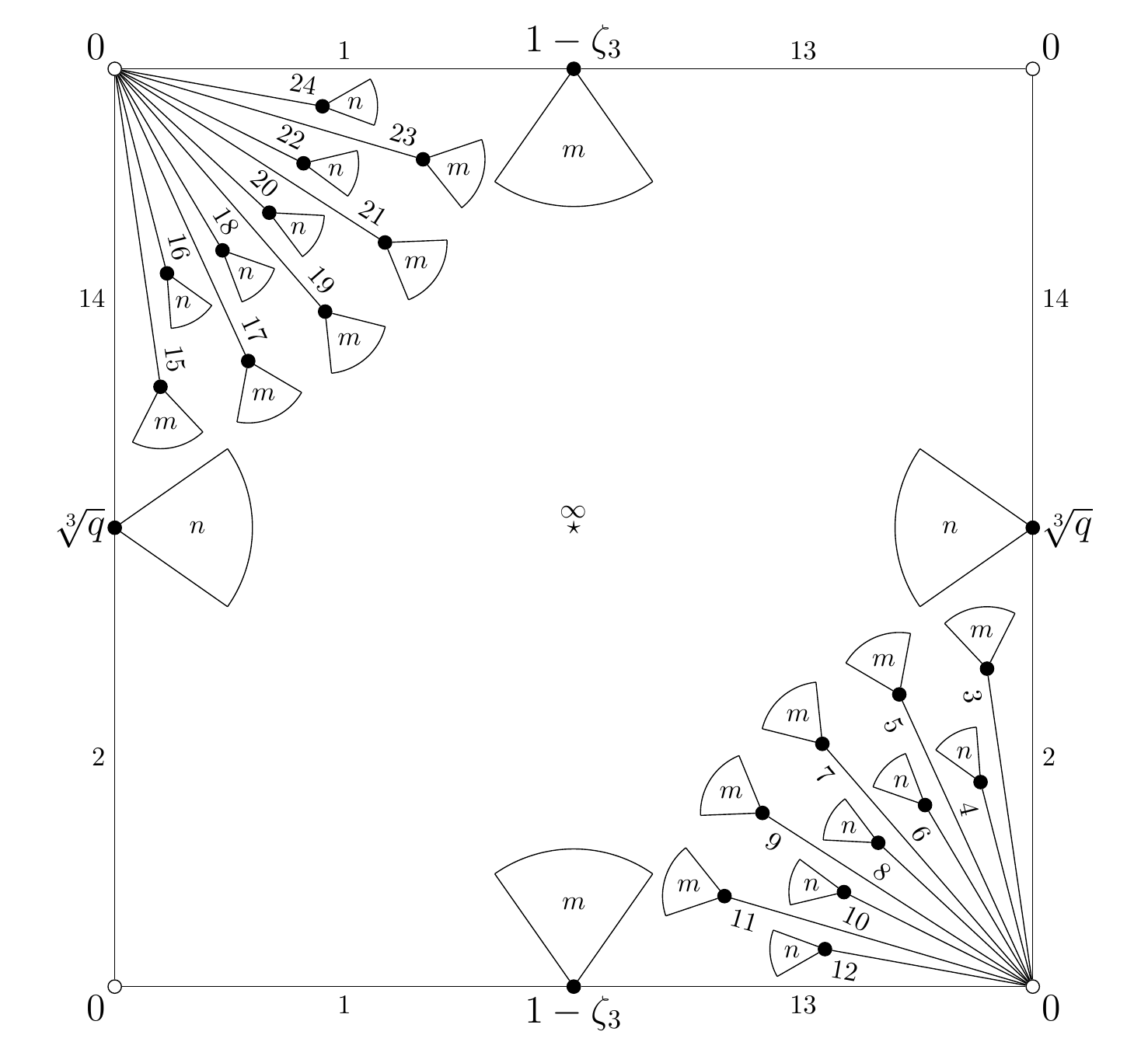}
  \end{center}
  \caption{$\cD_1\coloneqq\cD$}
  \label{ex1-1}
 \end{subfigure}
 \begin{subfigure}{0.5\hsize}
  \begin{center}
   \includegraphics[height=0.9\hsize]{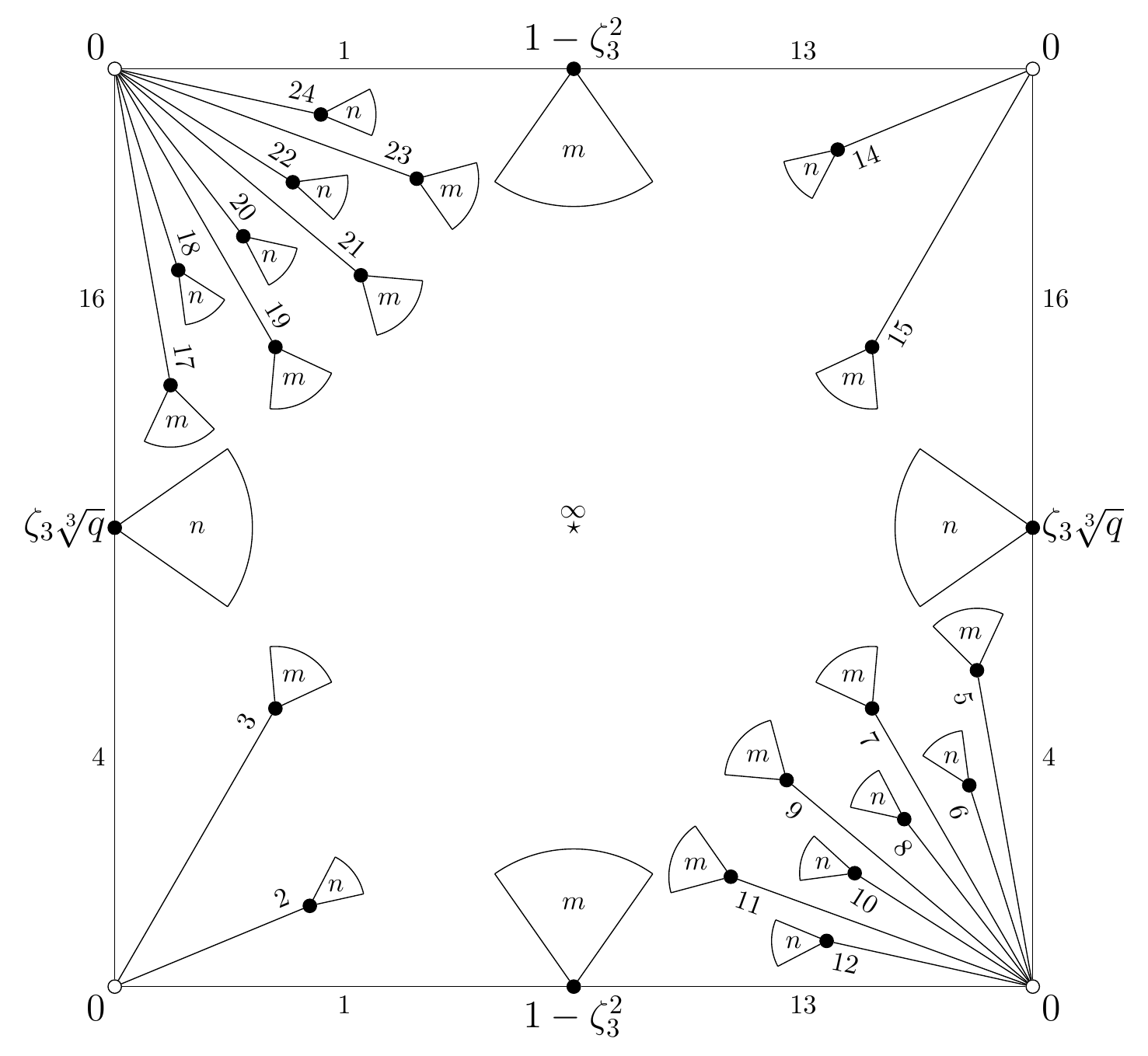}
  \end{center}
  \caption{$\cD_2\coloneqq\cD^\sigma,\sigma\colon(\zeta_3,\sqrt[3]{q})\mapsto(\zeta_3^2,\zeta_3\sqrt[3]{q})$}
 \end{subfigure}
\end{figure}

\begin{figure}[H]\ContinuedFloat
 \begin{subfigure}{0.5\hsize}
  \begin{center}
   \includegraphics[height=0.9\hsize]{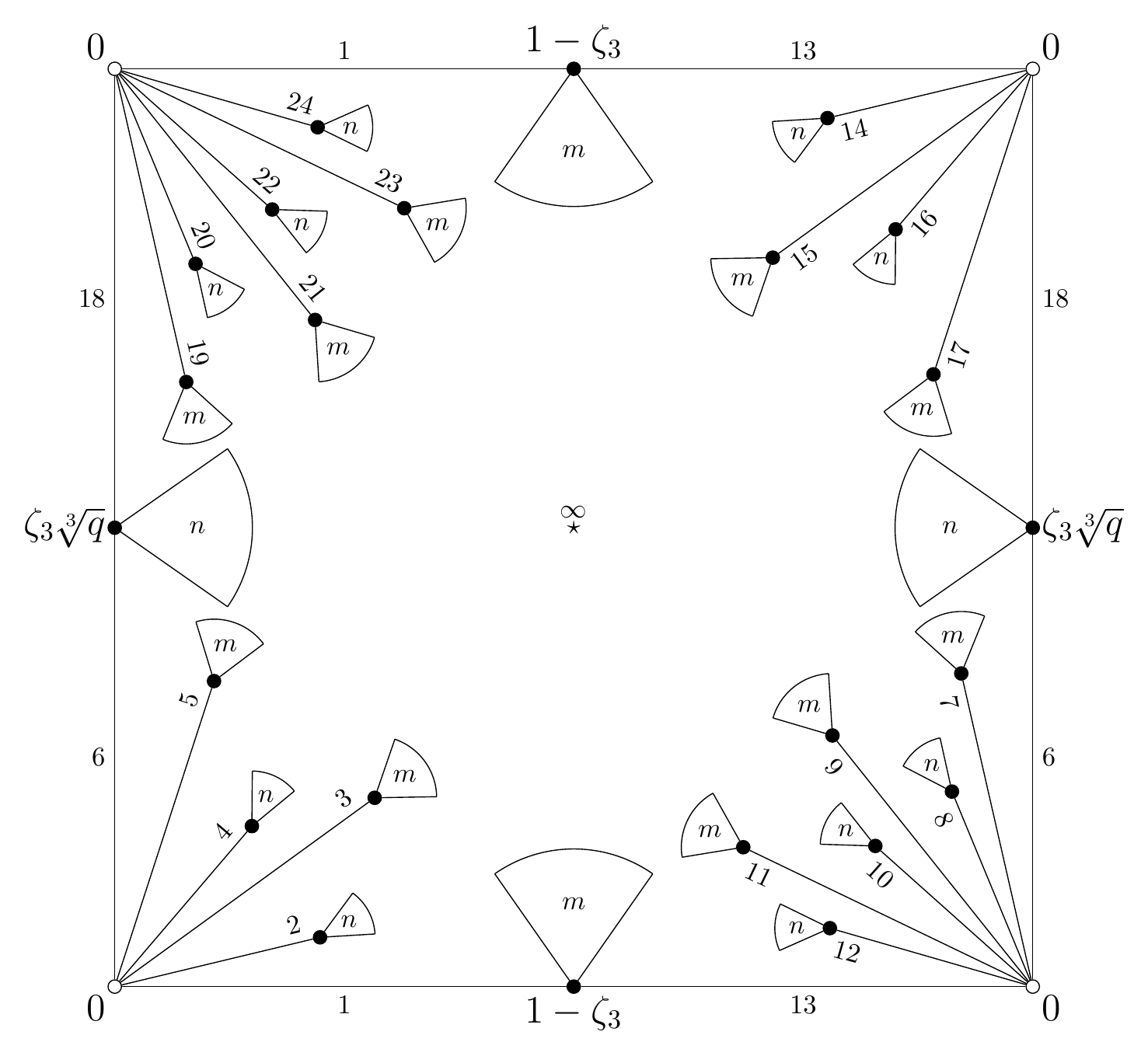}
  \end{center}
  \caption{$\cD_3\coloneqq\cD^\sigma,\sigma\colon(\zeta_3,\sqrt[3]{q})\mapsto(\zeta_3,\zeta_3\sqrt[3]{q})$}
 \end{subfigure}
 \begin{subfigure}{0.5\hsize}
  \begin{center}
   \includegraphics[height=0.9\hsize]{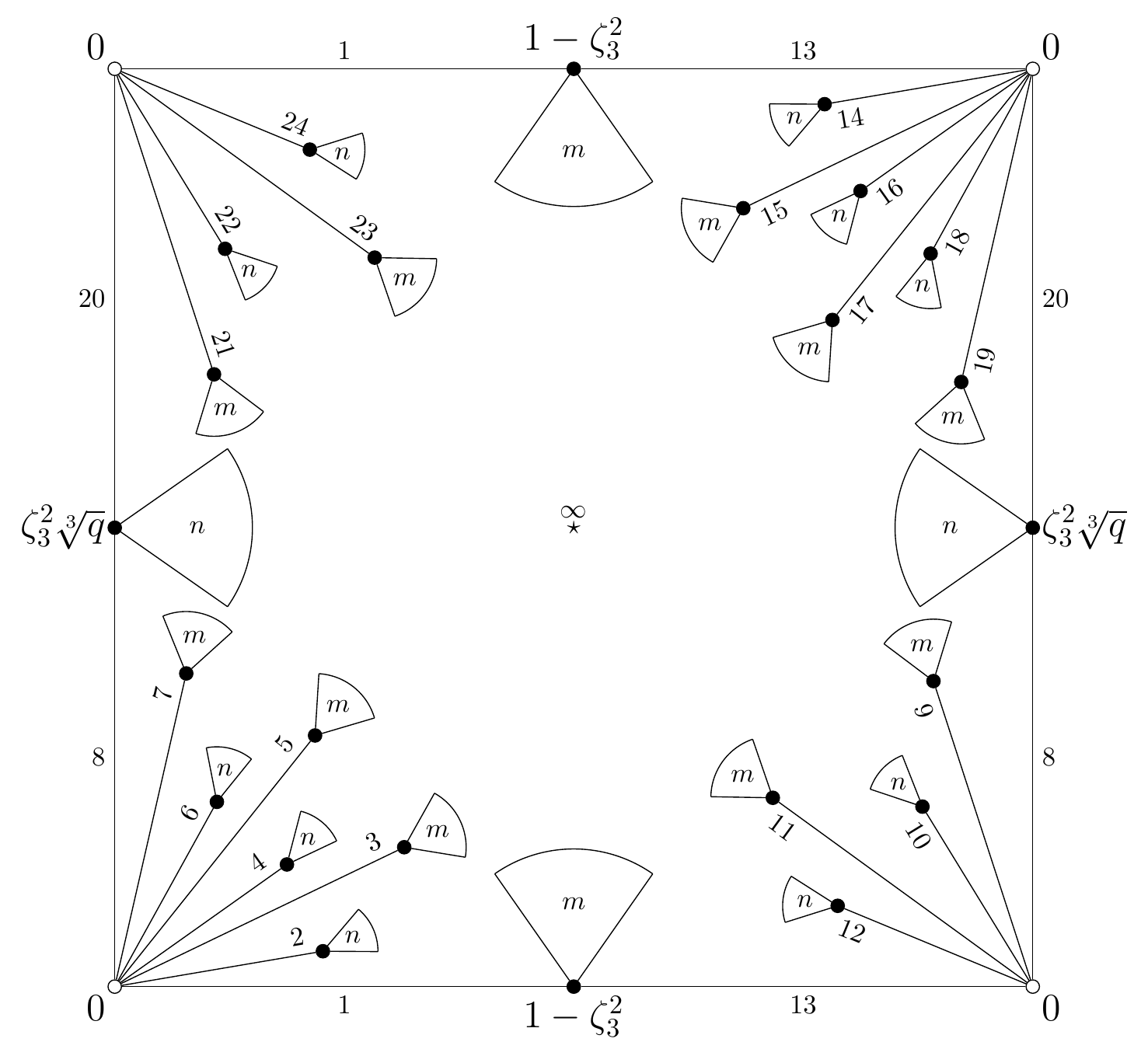}
  \end{center}
  \caption{$\cD_4\coloneqq\cD^\sigma,\sigma\colon(\zeta_3,\sqrt[3]{q})\mapsto(\zeta_3^2,\zeta_3^2\sqrt[3]{q})$}
 \end{subfigure}
\end{figure}

\begin{figure}[H]\ContinuedFloat
 \begin{subfigure}{0.5\hsize}
  \begin{center}
   \includegraphics[height=0.9\hsize]{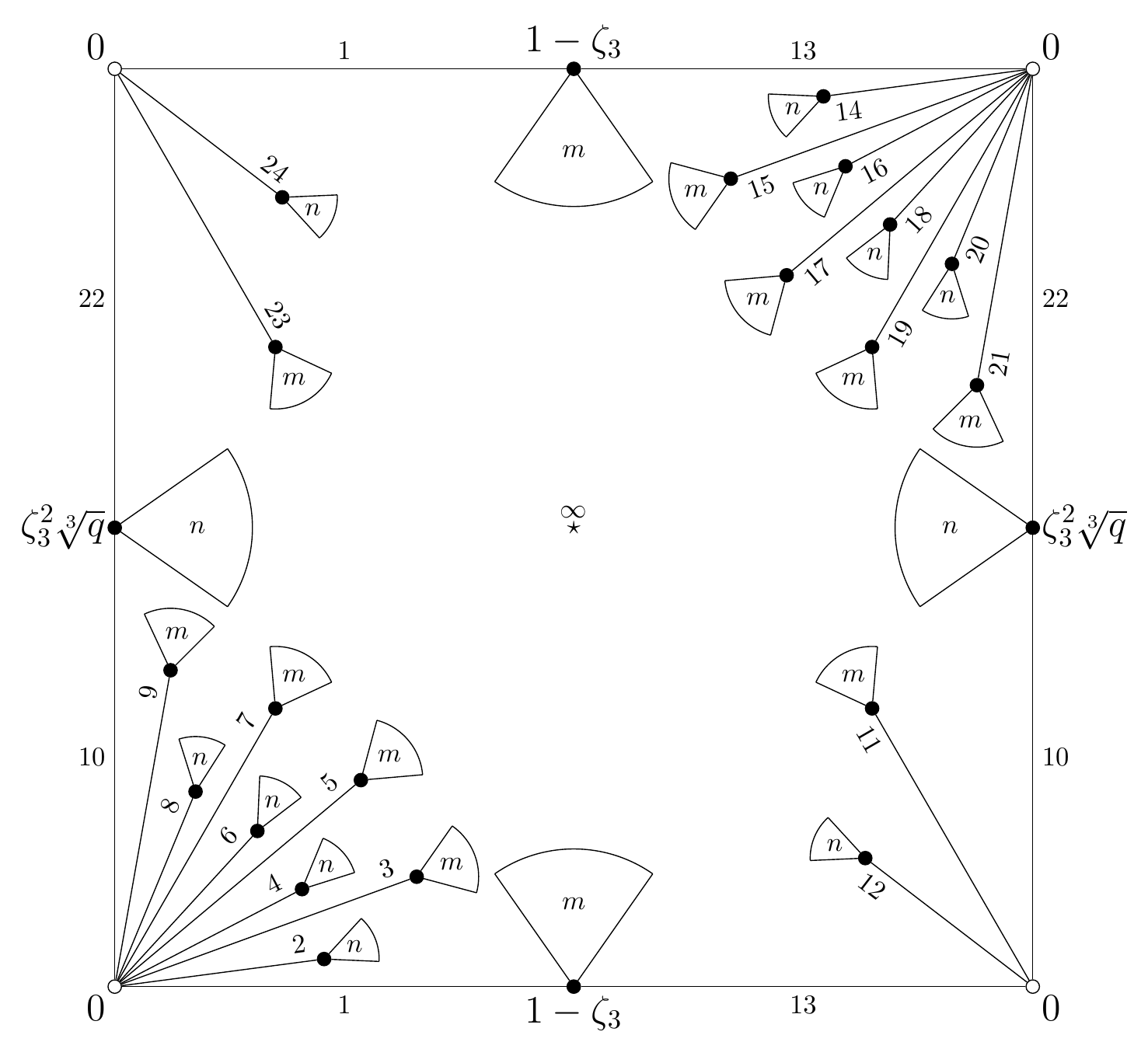}
  \end{center}
  \caption{$\cD_5\coloneqq\cD^\sigma,\sigma\colon(\zeta_3,\sqrt[3]{q})\mapsto(\zeta_3,\zeta_3^2\sqrt[3]{q})$}
 \end{subfigure}
 \begin{subfigure}{0.5\hsize}
  \begin{center}
   \includegraphics[height=0.9\hsize]{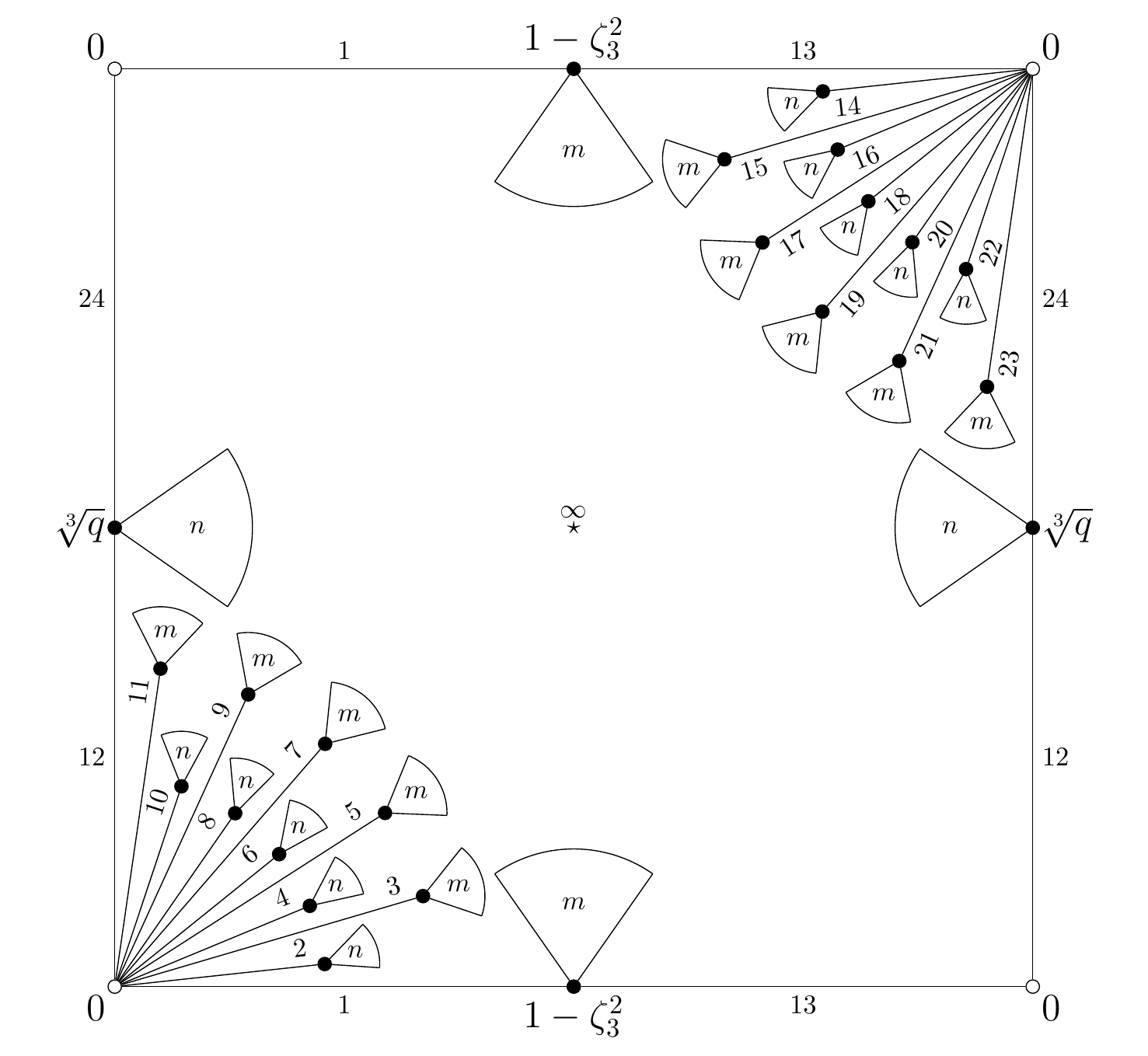}
  \end{center}
  \caption{$\cD_6\coloneqq\cD^\sigma,\sigma\colon(\zeta_3,\sqrt[3]{q})\mapsto(\zeta_3^2,\sqrt[3]{q})$}
  \label{ex1-6}
\end{subfigure}
\caption{Dessins $\cD_1,\dots,\cD_6$ in the Galois orbit of $\cD$.}
\label{ex1-dessins}
\end{figure}

We will now establish that $\widetilde{\cD_1}$ is not isomorphic to $\widetilde{\cD_2},\dots,\widetilde{\cD_6}$. To that end it suffices to show that there is no isomorphism between the cartographic groups fixing the canonical generators. We shall therefore exhibit an element $\omega\in F_2=\langle \genx,\geny\rangle$ such that $M_k(\omega)$ commutes with $M_k(\geny^2)$ only when $k=1$, where $M_k$ is the monodromy map of $\cD_k$.

\medskip
We have defined $m$ and $n$ to be positive coprime integers such that $\frac{27}{27+q^2}=\frac{m}{m+n}$, so we cannot have $m=n=1$. We will treat the case where $m\neq1$ does not divide $n$, the other case being treated similarly. Let $$\omega\coloneqq \genx^n\geny^{-1}\genx^{m-n}\geny\genx^n.$$ We shall show that $M_k(\omega)$ commutes with $M_k(\geny^2)$ only when $k=1$.

Let $E_k\coloneqq\{1,2,\dots,24\}$ be the set of edges of $\cD_k$ incident to $0$. The action of $\geny$ fixes the set $E_k$ on which it induces the cyclic permutation $(1,2,\dots,24)$, and every white vertex except $0$ has degree one so the action of $\geny$ is trivial on the complement of $E_k$.

We can write $E_k=E_k^{\odd}\sqcup E_k^{\even}$ as the disjoint union of the sets of respectively odd and even numbered edges incident to $0$, such that $\geny$ sends one to the other. The black vertices of $E_k^{\odd}$ are of degree $m$ except for the two black vertices of the edges $1$ and $13$ that are of degree $2m$. Therefore if $m$ does not divide some integer $l$ then $\genx^l$ sends every edge of $E_k^{\odd}$ to the complement of $E_k$, and otherwise the action of $\genx^m$ on $E_k^{\odd}$ corresponds to the sole transposition $(1,13)$. Similarly if $n$ does not divide $l$ then $\genx^l$ sends every edge of $E_k^{\even}$ to the complement of $E_k$, and the action of $\genx^n$ on $E_k^{\even}$ is the transposition $(2k,2k+12)$.

In particular, by hypothesis $n$ is not a multiple of $m$, so $m-n$ is not a multiple of $m$ either, hence both $\genx^n$ and $\genx^{m-n}$ send the edges of $E_k^{\odd}$ to the complement of $E_k$. However $\geny$ acts trivially on the latter, so $\genx^n\geny^{-1}\genx^{m-n}$ and $\genx^{m-n}\geny\genx^n$ both fix the set $E_k^{\odd}$ on which they induce the same action as $\genx^m$, i.e. the transposition $(1,13)$. Therefore the action of $\omega=\genx^n\geny^{-1}\genx^{m-n}\geny\genx^n$ is the same as that of $\genx^m\geny\genx^n$ on $E_k^{\odd}$ and the same as that of $\genx^n\geny^{-1}\genx^m$ on $E_k^{\even}$. See Figure \ref{ex1-trans}.

\begin{figure}[H]
  \centering
  \includegraphics[height=3cm]{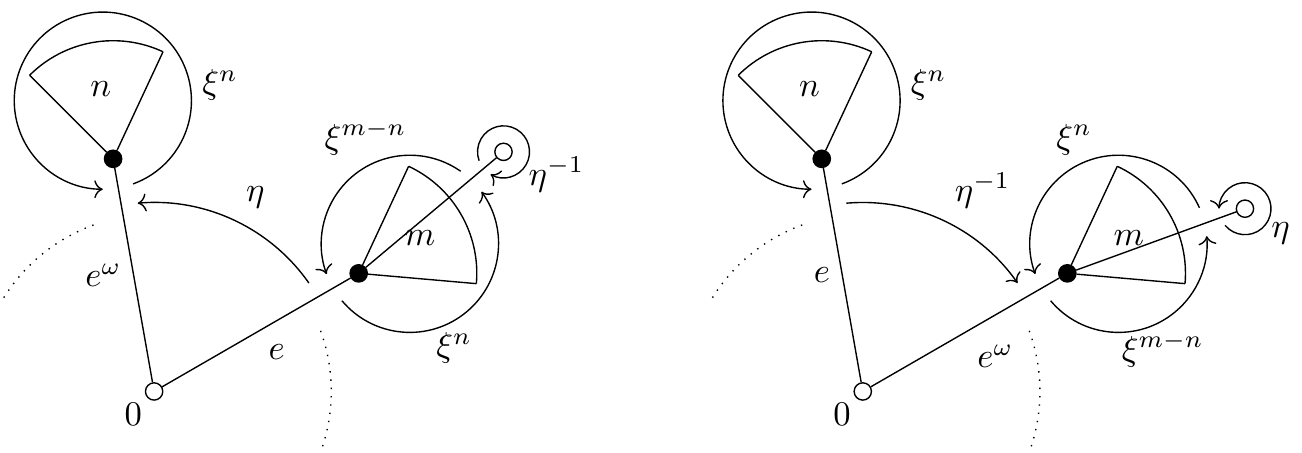}
  \caption{Action of $\omega$ on $E_k^{\odd}\setminus\{1,2k-1\}$ and on $E_k^{\even}\setminus\{2,2k\}$.}
  \label{ex1-trans}
\end{figure}

The action of $\omega$ fixes the set $E_k$ on which it induces the permutation $$M_k(\omega)|_{E_k}=(1,13)(2k,2k+12)\cdot(1,2)(3,4)\cdots(23,24)\cdot(1,13)(2k,2k+12)$$

Therefore for $k=1$,
\begin{align*}
  M_1(\omega)|_{E_1}&=(1,13)(2,14)\cdot(1,2)(3,4)\cdots(23,24)\cdot(1,13)(2,14)\\
                    &=(1,2)(3,4)\cdots(23,24)
\end{align*}
so $\omega$ and $\geny^2$ commute on $E_1$. Moreover $\geny$ acts trivially on the complement of $E_1$ so $M_1(\omega)|_{\cD_1\setminus E_1}$ and $M_1(\geny^2)|_{\cD_1\setminus E_1}$ automatically commute. Finally, we obtain that $M_1(\omega)$ and $M_1(\geny^2)$ commute.

\medskip
For $k=2$, we observe that $4^{\omega\geny^2}=15^{\geny^2}=17$ but $4^{\geny^2\omega}=6^\omega=5$. Similarly, for $3\leq k\leq6$, we observe that $1^{\omega\geny^2}=14^{\geny^2}=16$ but $1^{\geny^2\omega}=3^\omega=4$. We have thus shown that $M_k(\omega)$ and $M_k(\geny^2)$ commute only for $k=1$.

\medskip
This concludes the proof that $\widetilde{\cD}$ is a regular dessin with moduli field $\QQ(\zeta_3,\sqrt[3]{q})$.

\subsection{Regular dessins with moduli fields of the form $\QQ(\zeta_p,\sqrt[p]{q})$}
Let $p$ be an odd prime, and $q\in\QQ_{>0}$ a positive rational number that is not a $p$th power. In this example we will need an additional parameter $\gamma\in\QQ\setminus\{0\}$. Let
\begin{align*}
  C\colon y^2=x(x-(1-\zeta_p))(x-\gamma\sqrt[p]{q}).
\end{align*}

We construct the Belyi function $\beta\colon C\to\PP$ as the composition $\beta=\beta_2\circ\beta_1\circ\beta_0\circ\pi$ of the following maps.
\begin{enumerate}
\item $\pi\colon C\to\PP$ is the projection on the coordinate $x$, which ramifies over $\{0,1-\zeta_p,\gamma\sqrt[p]{q},\infty\}$.
\item $\beta_0\coloneqq X^{2p}\in\QQ[X]$, and $\Crit(\beta_0)=\{0,\infty\}$ so $\beta_0\circ\pi$ ramifies over $\{0,(1-\zeta_p)^{2p},\gamma^{2p}q^2,\infty\}$.
\item $\beta_1\in\QQ[X]$ is chosen independently of $\gamma$ such that $\Crit(\beta_1)\cup\{\beta_1((1-\zeta_p)^{2p})\}=\{0,1,\infty\}$, $\beta_1((1-\zeta_p)^{2p})=0<\beta_1(0)<1$ and $\beta_1'(0)>0$. The existence of $\beta_1$ verifying those conditions is assured by Proposition \ref{unramified-belyi-function} below. Under those assumptions $\beta_1\circ\beta_0\circ\pi$ ramifies over $\{0,1,\beta_1(0),\beta_1(\gamma^{2p}q^2),\infty\}$.
\item $\gamma\in\QQ_{>0}$ is then chosen small enough so that $\beta_1'>0$ on $[0,\gamma^{2p}q^2]$. This guarantees us that we have $0<\beta_1(0)<\beta_1(\gamma^{2p}q^2)<1$.
\item $\beta_2\coloneqq B_{r,s}\circ B_{m,n}$, where $(m,n)$ and $(r,s)$ are pairs of coprime positive integers such that $\beta_1(\gamma^{2p}q^2)=\frac{m}{m+n}$ and $B_{m,n}(\beta_1(0))=\frac{r}{r+s}$. Finally, $\beta=\beta_2\circ\beta_1\circ\beta_0\circ\pi$ ramifies over $\{0,1,\infty\}$.
\end{enumerate}

The pair $(C,\beta)$ is thus a Belyi pair, and we call $\cD$ the corresponding dessin. With the same arguments as before, the moduli field of $\cD$ is $\QQ(\zeta_p,\sqrt[p]{q})$, which is a nonabelian Galois extension of $\QQ$ with Galois group $$\Gal(\QQ(\zeta_p,\sqrt[p]{q})/\QQ)\cong\ZZn{p}\rtimes(\ZZn{p})^\times$$ generated by $\sigma\colon\zeta_p^i\sqrt[p]{q}\mapsto\zeta_p^{i+1}\sqrt[p]{q}$ and $\tau\colon\zeta_p^i\sqrt[p]{q}\mapsto\zeta_p^{gi}\sqrt[p]{q}$ where $g$ generates $(\ZZn{p})^\times$. We shall show that there exists $\gamma\in\QQ\setminus\{0\}$ such that the regular closure of the dessin $\cD$ thus obtained also has moduli field $\QQ(\zeta_p,\sqrt[p]{q})$.

\begin{remark}
  In the previous subsection we treated the case $p=3$. In that specific case we gave a simpler expression for $\beta$, mainly due to the fact that $\beta_0\circ\pi$ already had all of its critical values in $\QQ\cup\{\infty\}$. However in the general case we must use the intermediate map $\beta_1$ as well as the parameter $\gamma$ to conclude the proof.
\end{remark}

\bigskip
Let us first prove the existence of $\beta_1$.

\begin{proposition}\label{unramified-belyi-function}
  Let $E\subset\QQb\cap\RR\setminus\{0\}$ be a finite set. Then there exists $P\in\QQ[X]$ such that $P(E)\subseteq\{0\}$, $\Crit(P)\subseteq\{0,1\}$, $0<P(0)<1$ and $P'(0)>0$.
\end{proposition}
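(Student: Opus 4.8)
The plan is to reduce the statement to the existence of a Belyi polynomial vanishing on $E$ (which is essentially Belyi's theorem applied to a finite $\AGG$-stable set), and then to upgrade such a polynomial by a sequence of controlled post-compositions that enforce the three real normalizations $P(E)\subseteq\{0\}$, $0<P(0)<1$ and $P'(0)>0$ while keeping $\Crit(P)\subseteq\{0,1\}$. First I would pass to the $\AGG$-orbit $E'$ of $E$, a finite $\AGG$-stable subset of $\QQb$ with $0\notin E'$, and take $f_0\in\QQ[X]$ to be the monic squarefree polynomial whose zero set is exactly $E'$; then $f_0(E)=\{0\}$ and $f_0(0)\neq0$. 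Applying the two standard reduction steps behind Belyi's theorem — first rationalizing the (a priori algebraic) finite critical values by composing with a suitable $\QQ$-polynomial, then collapsing the resulting rational critical values into $\{0,1\}$ by an alternating sequence of affine maps and maps $B_{a,b}$ — produces $G\in\QQ[X]$ with $\Crit(G)\subseteq\{0,1\}$ and $G(E')\subseteq\{0\}$. Every map used can be taken to fix the eventual image of $E'$ at $0$, so $P(E)=\{0\}$ will be preserved by any further post-composition $h$ with $h(0)=0$.

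The one place that genuinely needs care is to carry the auxiliary marked point $0$ along throughout this collapse, keeping its image $w$ distinct from $0$, $\pm1$ and from the current critical values. The point is that a post-composition can never move the marked point off a critical value once it has landed there — its image would then be forced into $\{0,1\}$ — so the separation must be maintained \emph{during} the reduction, not repaired afterwards. At each step only finitely many choices of the intermediate affine and $B_{a,b}$ data are forbidden, so the separation can be arranged, and the outcome is a $G$ as above with $w=G(0)\in\QQ\setminus\{0,\pm1\}$ not a critical value.

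It then remains to steer $w$ into the open interval $(0,1)$ and to fix the sign of the derivative. For the first I would post-compose with a map of the form $B_{a,b}\circ\rho$, where $\rho$ is a rational affine scaling and $\tfrac{a}{a+b}$ and $\rho$ are chosen so that $0\mapsto0$, the image of the critical value $1$ is sent to the peak $\tfrac{a}{a+b}$ (hence to $1$), and $w$ is sent to an interior point; when $w<0$ I would precede this by the fold $x\mapsto x^2$, which fixes $0$, has critical value $0$, and reduces to the case $w>0$. Setting $P_0=h\circ G$ gives $\Crit(P_0)\subseteq\{0,1\}$, $P_0(E)=\{0\}$ and $P_0(0)\in(0,1)$. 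Since $P_0(0)\in(0,1)$ is not a critical value, $0$ is not a critical point and $P_0'(0)\neq0$; a final post-composition with a single $B_{a',b'}$ whose peak $\tfrac{a'}{a'+b'}$ is placed above or below $P_0(0)$ — using that $B_{a',b'}$ increases to the left of its peak and decreases to the right — fixes the sign so that $(B_{a',b'}\circ P_0)'(0)>0$, while keeping the image in $(0,1)$, sending $0\mapsto0$, and preserving $\Crit\subseteq\{0,1\}$. The resulting $P$ meets all four requirements.

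The main obstacle is thus the simultaneous bookkeeping of the second and third paragraphs: one must run Belyi's reduction while pinning the image of $E$ at $0$ and keeping the marked point $0$ away from the critical values, so that it can finally be delivered into the interior of $(0,1)$. By contrast the derivative condition $P'(0)>0$ is the easy last touch, handled by the monotonicity of $B_{a',b'}$ on either side of its peak.
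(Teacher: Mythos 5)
Your overall architecture is the same as the paper's: run Belyi's two-phase reduction (rationalize the critical values, then collapse them into $\{0,1\}$ with the polynomials $B_{m,n}$) while carrying a marked point, and finish with monotonicity of $B_{m,n}$ on either side of its peak to get $0<P(0)<1$ and $P'(0)>0$. You also correctly isolate the crux: once the image of $0$ lands on a critical value it is trapped in $\{0,1\}$ forever, so the separation must be maintained throughout. The gap is precisely in your justification of that maintenance. You claim that ``at each step only finitely many choices of the intermediate affine and $B_{a,b}$ data are forbidden, so the separation can be arranged,'' but in the collapsing phase there is essentially no freedom: the affine maps must fix $0$ (to keep $P(E)\subseteq\{0\}$), so they are scalings, and the peak $\tfrac{a}{a+b}$ of each $B_{a,b}$ is forced to sit exactly at the critical value being promoted to $1$. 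A collision $B_{a,b}(w)=B_{a,b}(s_i)$ with $w$ and a critical value $s_i$ on opposite sides of the forced peak is then a real possibility, and it is not avoided by perturbing data that you are not free to perturb. Until this is repaired, the step ``the outcome is a $G$ with $w=G(0)$ not a critical value'' is unproved.

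The paper closes exactly this hole with two concrete devices, at the cost of inverting your bookkeeping. In its first lemma it pins the marked point at $0$ (every intermediate polynomial satisfies $P_i(0)=0$) and instead tracks the images of $E$, keeping them in $\QQ\setminus\{0\}$; the needed genericity is available there because the rationalizing polynomials $G_\alpha=\prod_j(X-b_j)(X+b_j-2\alpha)$ carry a free rational parameter $\alpha$. In its second lemma, with $E\subset\QQ\setminus\{0\}$, it first applies $(X-\alpha)^2$ with $\alpha<0$ small so that the image of $0$ becomes the \emph{minimum} of the whole (positive, real) configuration, inserts a sentinel rational point $a_0$ just above it, and then promotes the remaining points from the top down; since each $B_{m,n}$ is increasing below its peak, the marked point's image automatically stays strictly between $0$ and the image of $a_0$, so no collision can occur, the final value lies in $(0,1)$, and the derivative stays positive for free. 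If you want to keep your normalization (pinning $E$ at $0$ and letting $w$ wander), you need an analogous device --- e.g.\ a $0$-fixing quadratic $x\mapsto(x-\lambda)^2-\lambda^2$ with $\lambda$ rational near $w$ to make $w$ extremal before collapsing --- rather than an appeal to genericity. Your final two steps (folding by $x\mapsto x^2$ when $w<0$, steering with $B_{a,b}\circ\rho$, and fixing the sign of $P'(0)$ with one last $B_{a',b'}$) are fine and match the spirit of the paper's argument.
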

\begin{remark}
  In the context of this proposition we only deal with polynomials so for $P\in\QQ[X]$ we define $\Crit(P)\coloneqq\{P(z)|\;z\in\CC,P'(z)=0\}$, which does not include the point at infinity to simplify notations.
\end{remark}
\begin{proof}
To show this we will proceed similarly as in the proof of the {\em only if} part of Belyi's theorem, by applying additional transformations to ensure that $0<P(0)<1$. Let us first prove that we can reduce to the case where $E$ is a subset of rational numbers.

\begin{lemma}
  Let $E\subset\QQb\cap\RR\setminus\{0\}$ be a finite set fixed by $\AGG$. Then there exists $P\in\QQ[X]$ such that $P(0)=0$ and $\Crit(P)\cup P(E)\subset\QQ\setminus\{0\}$.
\end{lemma}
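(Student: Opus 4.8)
The plan is to adapt the classical reduction used in the ``only if'' direction of Belyi's theorem, which rationalizes the critical values of a polynomial by repeatedly composing on the outside with the minimal polynomial of the offending (irrational) critical values. The extra feature we need is that the marked point $0$ must be sent to $0$ while keeping its image off both $P(E)$ and $\Crit(P)$, so the whole reduction has to be carried out while \emph{protecting} the point $0$.

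First I would set $f_0\coloneqq\prod_{e\in E}(X-e)$. Since $E$ is $\AGG$-stable, the elementary symmetric functions of its elements are rational, so $f_0\in\QQ[X]$; moreover $f_0(E)=\{0\}$, and $f_0(0)=\prod_{e\in E}(-e)\neq0$ because $0\notin E$. Thus from the outset the image of $E$ (namely $0$) is a single rational value, distinct from the image $f_0(0)$ of the marked point, and the only remaining irrationality lives in the finite, $\AGG$-stable set $\Crit(f_0)$. I would then iterate a rationalization step: writing $f$ for the current composite and letting $c_1,\dots,c_N$ be its distinct irrational critical values (an $\AGG$-stable set, so the monic polynomial $m$ vanishing exactly on them lies in $\QQ[X]$), composing with $m$ sends every $c_i$ to $0$, and since $\deg m'=N-1$ the map $m$ contributes at most $N-1$ new critical values; hence $\Crit(m\circ f)=m(\Crit f)\cup\Crit(m)$ has fewer than $N$ irrational elements. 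The count therefore strictly decreases, and after finitely many steps we reach $g\in\QQ[X]$ with $\Crit(g)\subset\QQ$ and $g(E)$ equal to a single rational value. Tracking the image $r$ of $0$ and the image $p$ of $E$ throughout, the final polynomial is obtained by the affine normalization $P\coloneqq g-g(0)$, which forces $P(0)=0$, gives $P(E)=\{p-r\}$ and $\Crit(P)=\Crit(g)-r$, and lands everything in $\QQ$.

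The hard part is the separation, and this is where I expect the main obstacle to lie: after a composition step the images of $E$ and of $0$ could collide ($m(p)=m(r)$), or $r$ could land on a critical value, and no further outer composition can undo such a coincidence once it has happened. To prevent it I would replace the bare minimal polynomial $m$ at each step by a one-parameter perturbation $h\coloneqq m(X)\,(\alpha X+\beta)+\gamma$ with $\alpha,\beta,\gamma\in\QQ$ and $\alpha\neq0$. This still collapses every $c_i$ to the single rational value $\gamma$; and by pinning one rational point $x_0$ to be a critical point of $h$ (a single linear condition on $(\alpha,\beta)$, whose critical value $h(x_0)$ is automatically rational), one ensures that the remaining at most $N-1$ critical points account for all the irrational critical values, so the count still strictly decreases.

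What makes the separation achievable is that $h(p)-h(r)=\alpha\bigl(p\,m(p)-r\,m(r)\bigr)+\beta\bigl(m(p)-m(r)\bigr)$ (the constant $\gamma$ cancels), and its coefficient vector vanishes only if $m(p)=m(r)=0$, i.e.\ only if $p$ is a root of $m$; but the roots of $m$ are the \emph{irrational} values $c_i$, whereas $p\in\QQ$, so this cannot occur. Hence the ``collision locus'' is a proper, finite subset of the admissible rational parameters, as is the locus forcing $r\in\Crit(h\circ f)$, and both can be dodged. Verifying that a single choice of $(\alpha,\beta,\gamma)$ can be made to respect the strict decrease of the count \emph{and} all the non-collision conditions simultaneously is the delicate point; the remainder is the standard Belyi bookkeeping.
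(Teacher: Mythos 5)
You have correctly identified the crux---the marked point $0$ must be kept off $\Crit\cup(\text{image of }E)$ at every stage, since a collision can never be undone by a further outer composition---but the mechanism you propose does not deliver this, and the proposal already fails at its opening move. Take $E=\{\sqrt2,-\sqrt2\}$, a finite $\AGG$-stable subset of $\QQb\cap\RR\setminus\{0\}$: then $f_0=\prod_{e\in E}(X-e)=X^2-2$ has $f_0(0)=-2$ and $\Crit(f_0)=\{-2\}$, so the image of $0$ lands on a critical value immediately and, by your own observation, irrevocably. More seriously, the perturbation $h=m(X)(\alpha X+\beta)+\gamma$ gives you less freedom than you are counting on: every separation condition you need ($h(p)\ne h(r)$, $h(r)\ne h(c)$ for $c\in\Crit(f)\cup f(E)$, and $h(r)\notin\Crit(h)$) concerns a difference $h(u)-h(v)$, from which $\gamma$ cancels and which merely rescales under $(\alpha,\beta)\mapsto(t\alpha,t\beta)$, while the critical points of $h$ depend only on the ratio $(\alpha:\beta)$. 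Since the pinning condition $h'(x_0)=0$ fixes that ratio, the only effective parameter is $x_0$, so ``dodging a finite bad locus in the admissible rational parameters'' is not an available argument. One must instead show that each bad condition, viewed as a polynomial identity in $x_0$, is not identically zero; for $h(r)\ne h(c)$ this can be done by a leading-coefficient computation, but for $h(r)\notin\Crit(h)$ it amounts to the non-vanishing of $\mathrm{Res}_Y\bigl(h'(Y),h(Y)-h(r)\bigr)$ as a function of $x_0$, which is precisely the ``delicate point'' you defer---and it is the actual content of the lemma, not bookkeeping.

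For comparison, the paper avoids this entirely by putting the free parameter \emph{inside} the substitution rather than outside. Writing $E\setminus\QQ=\{b_1,\dots,b_n\}$, it uses $G_\alpha=F_\alpha((X-\alpha)^2)$ with $F_\alpha=\prod_j(X-(b_j-\alpha)^2)$, so that $G_\alpha=\prod_j(X-b_j)(X+b_j-2\alpha)$ kills $E\setminus\QQ$ for every $\alpha$, contributes only the rational new critical value $F_\alpha(0)$ together with the at most $n-1$ critical values of $F_\alpha$ (so an induction on $n=|E\setminus\QQ|$ goes through), and has $G_\alpha(0)=\prod_j(-b_j)(b_j-2\alpha)$ tending to $0$ as $\alpha\to b_1/2$ while the critical values stay bounded away from $0$; a finite genericity argument in the single parameter $\alpha$ then separates $G_\alpha(0)$ from $G_\alpha(E\cap\QQ)$, and the translation $G_\alpha-G_\alpha(0)$ normalizes the marked point. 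Your outer-composition scheme is a genuinely different route and could perhaps be completed, but the separation analysis it requires is the whole difficulty, not a routine afterthought.
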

\begin{proof}
  Let $\{a_1,\dots,a_m\}=E\cap\QQ$ and $\{b_1,\dots,b_n\}=E\setminus\QQ$. We construct $P$ by induction on the number $n$ of non rational elements of $E$.

  For $\alpha\in\QQ$, define $F_\alpha,G_\alpha\in\QQ[X]$ by $$F_\alpha\coloneqq\prod_{j=1}^n(X-(b_j-\alpha)^2)\quad\text{and}\quad G_\alpha\coloneqq F_\alpha((X-\alpha)^2)=\prod_{j=1}^n(X-b_j)(X+b_j-2\alpha).$$

  Let us first assume that there exists $\alpha\in\QQ$ such that $G_\alpha(0)\not\in\Crit(G_\alpha)\cup G_\alpha(E)$. Define $P_1(X)\coloneqq G_\alpha(X)-G_\alpha(0)\in\QQ[X]$, then $P_1(0)=0\not\in E'\coloneqq\Crit(P_1)\cup P_1(E)\subset\QQb\cap\RR\setminus\{0\}$. Note that $E'$ is stable under the action of $\AGG$, and $|E'\setminus\QQ|=|\Crit(F_\alpha)\cup F_\alpha(0)\setminus\QQ|=|\Crit(F_\alpha)\setminus\QQ|<\deg F_\alpha=n$. By induction, there exists $P_2\in\QQ[X]$ such that $P_2(0)=0$ and $\Crit(P_2)\cup P_2(E')\subset\QQ\setminus\{0\}$. Now $P\coloneqq P_2\circ P_1$ has the desired properties, since $P(0)=0$ and $\Crit(P)\cup P(E)=\Crit(P_2)\cup P_2(\Crit(P_1))\cup P_2(P_1(E))=\Crit(P_2)\cup P_2(E')\subset\QQ\setminus\{0\}$.

  \medskip
  Let us now prove that there exists $\alpha\in\QQ$ such that $G_\alpha(0)\not\in\Crit(G_\alpha)\cup G_\alpha(E)$. Let us first treat the case where $0<b_1<b_2,\dots,b_n$. When $\alpha$ approaches $\frac{1}{2}$ , $G_\alpha(0)=\prod_{j=1}^n-b_j(b_j-2\alpha)$ approaches $0$ but the critical values of $G_\alpha$ do not. Indeed, $\Crit(G_\alpha)=\Crit F_\alpha\cup F_\alpha(\Crit((X-\alpha)^2))=\Crit(F_\alpha)\cup\{F_\alpha(0)\}$; $F_\alpha(0)$ approaches $F_{\frac{b_1}{2}}(0)\neq0$, and since $F_{\frac{b_1}{2}}$ does not have multiple roots, the critical values of $F_\alpha$ approach the critical values of $F_{\frac{b_1}{2}}$ which are all non zero. Therefore for $\alpha\neq\frac{b_1}{2}$ in the neighborhood of $\frac{b_1}{2}$ we have $G_\alpha(0)\not\in\Crit(G_\alpha)$. Moreover $G_\alpha(0),G_\alpha(a_1),\dots,G_\alpha(a_m)$ are all distinct polynomials in the indeterminate $\alpha$, so they coincide at only finitely many points. In particular for $\alpha\neq\frac{b_1}{2}$ in the neighborhood of $\frac{b_1}{2}$ we have $G_\alpha(0)\not\in\{G_\alpha(a_1),\dots,G_\alpha(a_m)\}$. Since $\alpha\in\QQ$ we also have $G_\alpha(0)\neq0=G_\alpha(b_1)=\dots=G_\alpha(b_n)$ hence $G_\alpha(0)\not\in G_\alpha(E)$, proving the existence of $\alpha$ as desired.

  \medskip
  Let us now treat the general case where $b_1,\dots,b_n$ are not assumed to be positive by reducing it to the previous case. For ${\alpha'}\in\QQ$, define $H_{\alpha'}\in\QQ[X]$ by $$H_{\alpha'}\coloneqq (X-{\alpha'})^2-{\alpha'}^2\in\QQ[X].$$ Note that $\Crit(H_{\alpha'})=\{-{\alpha'}\}$. For ${\alpha'}>0$ sufficiently small we have $-{\alpha'}^2<H_{\alpha'}(0)=0<H_{\alpha'}(a_1),\dots,H_{\alpha'}(a_m),H_{\alpha'}(b_1),\dots,H_{\alpha'}(b_n)$. Let $E''\coloneqq\Crit(H_{\alpha'})\cup H_{\alpha'}(E)$. The set $E''$ is a finite subset of $\QQb\cap\RR\setminus\{0\}$ fixed by $\AGG$, and $E''$ has at most $n$ non rational elements, which are all positive. By the above, there exists $P_3\in\QQ[X]$ such that $\Crit(P_3)\cup P_3(E'')\subset\QQ\setminus\{0\}$ and $P_3(0)=0$. Then $P\coloneqq P_3\circ H_{\alpha'}$ has the desired properties, since $P(0)=0$ and $\Crit(P)\cup P(E)=\Crit(P_3)\cup P_3(\Crit(H_{\alpha'}))\cup P_3(H_{\alpha'}(E))=\Crit(P_3)\cup P_3(E'')\subset\QQ\setminus\{0\}$.
\end{proof}

Let us denote by $P_1$ the polynomial obtained using this lemma, which verifies $P_1(0)=0$ and $E'\coloneqq\Crit(P_1)\cup P_1(E)\subset\QQ\setminus\{0\}$. We can further assume that $P_1'(0)>0$ by taking $(-P_1)$ if necessary. We now send the points $E'$ to $\{0,1\}$.

\begin{lemma}
Let $E\subset\QQ\setminus\{0\}$ a finite set. Then there exists $P\in\QQ[X]$ such that $P(E)\subseteq\{0\}$, $\Crit(P)\subseteq\{0,1\}$, $0<P(0)<1$ and $P'(0)>0$.
\end{lemma}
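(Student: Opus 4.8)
The plan is to imitate the ``rational'' reduction step in the proof of Belyi's theorem --- pushing a finite set of rational numbers into $\{0,1\}$ by repeatedly composing with the Belyi polynomials $B_{m,n}$ --- while simultaneously dragging the origin along as a marked point so as to control $P(0)$ and $P'(0)$ at the very end.

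First I would apply one affine change of coordinates $L_0(X)=(X-a)/(c-a)\in\QQ[X]$ with rationals $a<0<c$ and $E\subset(a,c)$. Being of degree one, $L_0$ contributes no critical values, and it places $E$ inside the open interval $(0,1)$ while sending the origin to an interior point $t_0=L_0(0)\in(0,1)$. Writing $f$ for the polynomial constructed so far, I track the finite \emph{bad set} $S=f(E)\cup\Crit(f)\subseteq[0,1]$ together with the marked value $t=f(0)$, maintaining the invariants that $t\in(0,1)$ is a regular value of $f$ and that $t\notin S$ (the latter being a generic condition that can be secured by the choice of $a,c$).

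The reduction is then an induction on $|S\cap(0,1)|$. While this is nonzero, choose a rational $\lambda=m/(m+n)\in S\cap(0,1)$ in lowest terms (necessarily $\lambda\neq t$ since $t\notin S$) and replace $f$ by $B_{m,n}\circ f$. Because $B_{m,n}$ maps $[0,1]$ onto $[0,1]$ with $B_{m,n}(0)=B_{m,n}(1)=0$, $B_{m,n}(\lambda)=1$ and finite critical values in $\{0,1\}$, the point $\lambda$ and the endpoints are absorbed into $\{0,1\}$, every remaining point of $S\cap(0,1)$ stays in the open interval, and the freshly created critical values lie in $\{0,1\}$; hence $|S\cap(0,1)|$ strictly decreases. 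At the same time $t\in(0,1)\setminus\{\lambda\}$ forces $B_{m,n}(t)\in(0,1)$ with $B_{m,n}$ unramified at $t$, so the invariants survive. After finitely many steps $S\subseteq\{0,1\}$, producing a polynomial $g$ with $g(E)\subseteq\{0,1\}$, $\Crit(g)\subseteq\{0,1\}$, $g(0)=t\in(0,1)$ and $g'(0)\neq0$.

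It remains to normalize, and this is where I expect the genuine difficulty to lie. Replacing $g$ by $1-g$ if necessary --- an operation preserving all of the above while sending $t\mapsto 1-t$ and $g'(0)\mapsto-g'(0)$ --- I may assume $g'(0)>0$. I then collapse $\{0,1\}$ onto $\{0\}$ by one last composition with a Belyi polynomial $B_{m,n}$ whose interior critical point $m/(m+n)$ is chosen to the right of $g(0)$, that is $g(0)<m/(m+n)<1$. Setting $P\coloneqq B_{m,n}\circ g$ gives $P(E)\subseteq B_{m,n}(\{0,1\})=\{0\}$ and $\Crit(P)\subseteq\{0,1\}$, while $g(0)$ now lies on the increasing branch of $B_{m,n}$, so that $P(0)=B_{m,n}(g(0))\in(0,1)$ and $P'(0)=B_{m,n}'(g(0))\,g'(0)>0$, as required. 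The point to watch throughout is that the operations able to reposition the image of $0$ or flip its derivative sign (an affine move of the target, or $g\mapsto 1-g$) act rigidly on $E$ as well, so the origin cannot be translated onto a favourable branch after the fact: it must be carried correctly through every reduction, and it is the placement of the last critical point beyond $g(0)$ that secures $0<P(0)<1$ and $P'(0)>0$ simultaneously.
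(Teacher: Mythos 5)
Your overall strategy---absorbing the bad set into $\{0,1\}$ by repeated composition with the polynomials $B_{m,n}$ while carrying $0$ along as a marked point, then normalizing at the end---is the same as the paper's, and your final normalization (the $g\mapsto 1-g$ flip and a last $B_{m,n}$ with critical point to the right of $g(0)$) is sound. The gap is in the inductive step: you claim the invariant $t\notin S$ survives because $t\neq\lambda$, but that only gives $B_{m,n}(t)\in(0,1)$ and unramifiedness of $B_{m,n}$ at $t$. It does not rule out $B_{m,n}(t)=B_{m,n}(s)$ for some $s\in f(E)$ with $s\neq t$: on $(0,1)$ the map $B_{m,n}$ is two-to-one (increasing up to $m/(m+n)$, then decreasing), so the marked value can collide with the image of a point of $E$ sitting on the other branch. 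Concretely, if $S\cap(0,1)=\{1/2,\,2/3\}$ and $t=1/3$, the permitted choice $\lambda=1/2$ gives $B_{1,1}=4X(1-X)$ and $B_{1,1}(1/3)=B_{1,1}(2/3)=8/9$, so the new marked value lies in the new bad set; the next step then forces $\lambda'=8/9=t'$, which sends $t'$ to $1$ and ultimately to $0$. Once $f(0)=f(e)$ for some $e\in E$, the conclusion $0<P(0)$ is irrecoverable, since $P(e)=0$ at the end; the condition $P'(0)>0$ is lost at the same moment because $B_{m,n}$ is ramified at $m/(m+n)$. So the invariant you rely on is exactly the one your argument does not preserve.

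The paper closes this hole with an ordering device. It first composes with $F_\alpha=(X-\alpha)^2$ for $\alpha<0$ small (then rescales so the bad set lands in $(0,1]$): this fold makes $F(0)$ strictly smaller than every point of $F(E)$, which no affine map can achieve when $E$ meets both sides of $0$. It also inserts an auxiliary rational $a_0$ with $F(0)<a_0<\min F(E)$, and at each step it places the critical point of $B_{m,n}$ at the \emph{largest} bad point below $1$. Then the marked value and all surviving bad points lie on the increasing branch of $B_{m,n}$, where the map is injective and order-preserving; the marked value therefore stays strictly to the left of everything (in particular of the image of $a_0$, which ends at $1$, giving $P(0)<1$), no collision can occur, and the derivative at the marked point stays positive without any sign flip. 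You need some device of this kind---either this leftmost-marked-point invariant or an explicit proof that a collision-free choice of $\lambda$ exists at every step---before your induction is valid.
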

\begin{proof}
  For $\alpha\in\QQ$, let $F_\alpha\coloneqq(X-\alpha)^2\in\QQ[X]$, and note that $\Crit(F_\alpha)=\{0\}$. There exists $\alpha<0$ sufficiently small such that $0<F_\alpha(0)<F_\alpha(a)$ for all $a\in E$. We take $$F\coloneqq\frac{F_\alpha}{\max_{a\in E}F_\alpha(a)}.$$ Let $\{a_1,\dots,a_l\}=F(E)$ such that $0<F(0)<a_1<\dots<a_l=1$. We also add a rational point $a_0\in\QQ$ such that $F(0)<a_0<a_1$.

  \medskip
  Let $m$ and $n$ be the coprime positive integers such that $a_{l-1}=\frac{m}{m+n}$. We recall that $B_{m,n}$ verifies $\Crit(B_{m,n})=\{0,1\}$, $B_{m,n}(0)=B_{m,n}(1)=0$, $B_{m,n}(\frac{m}{m+n})=1$, and $B_{m,n}$ is strictly increasing between $0$ and $\frac{m}{m+n}$. Let $P_1\coloneqq B_{m,n}$, then $\Crit(P_1)=\{0,1\}$ and $0<P_1\circ F(0)<P_1(a_0)<\dots<P_1(a_{l-1})=1$. There is one point fewer than before, so we can iteratively construct $P_2,\dots,P_l$ in the same way, so that $P\coloneqq P_l\circ\cdots\circ P_1$ verifies $\Crit(P)\subseteq\{0,1\}$, $P(a_1)=\cdots=P(a_l)=0<P(F(0))<1=P(a_0)$ and $P'(F(0))>0$. Therefore $P\circ F$ has the desired properties.
\end{proof}

Let us denote by $P_2$ the polynomial obtained using this lemma with the finite set $E'$ obtained previously. Then the polynomial $P\coloneqq P_2\circ P_1$ verifies $P(E)\subseteq\{0\}$, $\Crit(P)\subseteq\{0,1\}$, $0<P(0)<1$ and $P'(0)>0$, thus concluding the proof of Proposition \ref{unramified-belyi-function}.
\end{proof}

We can now use Proposition \ref{unramified-belyi-function} with the finite set $$E\coloneqq\{(1-\zeta_p^k)^{2p}\}_{1\leq k\leq\frac{p-1}{2}}$$ to obtain the map $\beta_1$ as desired. For $1\leq k\leq\frac{p-1}{2}$ we have $(1-\zeta_p^k)^{2p}=(|1-\zeta_p^k|\zeta_{2p}^{2k-1})^{2p}=|1-\zeta_p^k|^{2p}\in\RR$ so $E\subset\QQb\cap\RR$, and the set $E$ is the Galois orbit of $(1-\zeta_p)^{2p}$ so it is fixed by $\AGG$, hence $E$ verifies the conditions of Proposition \ref{unramified-belyi-function}.

Let us denote by $\cD(\beta_1)$ the dessin corresponding to the Belyi pair $(\PP,\beta_1)$. The Belyi pair $(\PP,\beta_1)$ is fixed by the action of the complex conjugation, so the embedding of $\cD(\beta_1)$ on $\PP$ admits a symmetry along the real line. Moreover the Belyi function $\beta_1$ is a polynomial, so $\cD(\beta_1)\cap\RR$ is a (graph theoretic) path. Let $v_l<\dots<v_1$ be the negative vertices on the path, and let $e_k$ denote the edge $(v_{k-1},v_k)$. By hypothesis $\beta_1'(0)>0$ so $v_1$ is a black vertex, and for $k<l$, the vertex $v_k$ is of even degree $2d_k$. We then have $e_k^{\genx^{d_k}}=e_{k+1}$ and $e_{k+1}^{\genx^{d_k}}=e_k$ if $k$ is odd, or $e_k^{\geny^{d_k}}=e_{k+1}$ and $e_{k+1}^{\geny^{d_k}}=e_k$ if $k$ is even. See Figure \ref{construction-beta-ex3}.

\medskip
As remarked earlier, the Galois orbit of $(1-\zeta_p)^{2p}$ is $\{(1-\zeta_p^k)^{2p}\}_{1\leq k\leq\frac{p-1}{2}}\subset\RR_-$, and $(1-\zeta_p^{\frac{p-1}{2}})^{2p}<\cdots<(1-\zeta_p)^{2p}<0$. By construction $\beta_1((1-\zeta_p)^{2p})=0$, so $(1-\zeta_p)^{2p}$ and all its Galois conjugates are black vertices of $\cD(\beta_1)$ lying on the path $(v_1,\cdots,v_l)$. Let $t>0$ be the index such that $v_t=(1-\zeta_p)^{2p}$, and $v_t$ is a black vertex so $t$ is odd. Then $$\mu_0\coloneqq\genx^{d_1}\geny^{d_2}\cdots\geny^{d_{t-1}}\genx^{2d_{t}}\geny^{d_{t-1}}\cdots\geny^{d_2}\genx^{d_1}$$ fixes the edge $e_1$ (Figure \ref{construction-beta-ex3}).

\begin{figure}[H]
  \centering
  \includegraphics[height=2.2cm]{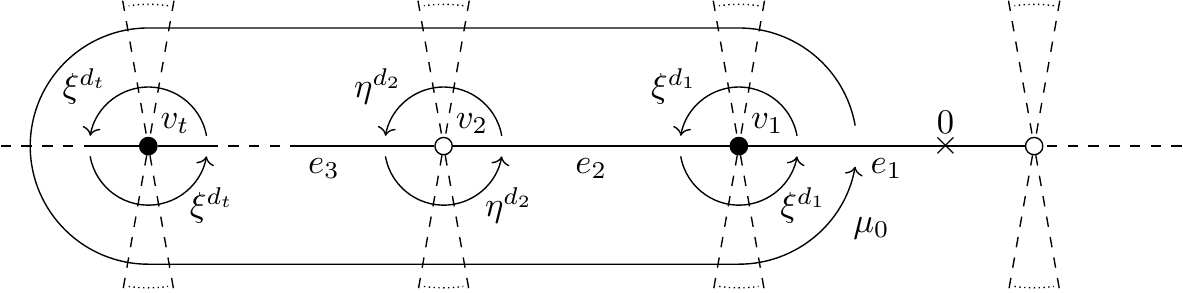}
  \caption{Dessin $\cD(\beta_1)$ corresponding to $(\PP,\beta_1)$.}
  \label{construction-beta-ex3}
\end{figure}

Let $\gamma>0$ small enough so that $\beta_1'>0$ on $[0,\gamma^{2p}q^2]$. Let us next draw the dessin $\cD(\beta_2)$ corresponding to the Belyi pair $(\PP,\beta_2=B_{r,s}\circ B_{m,n})$. See Figure \ref{ex2-beta2}.

\begin{figure}[H]
  \centering
  \includegraphics[height=4.2cm]{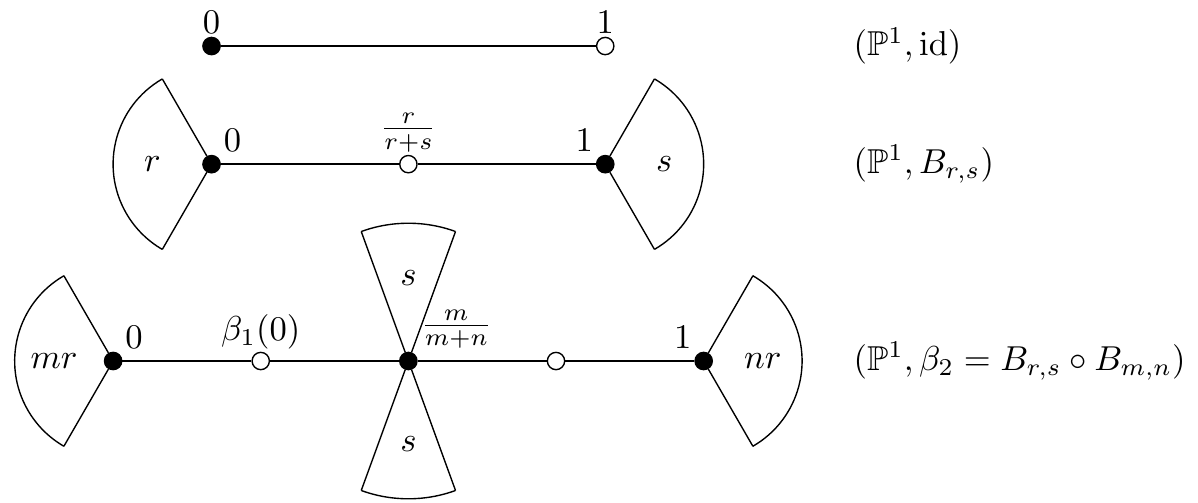}
  \caption{Dessin $\cD(\beta_2)$ corresponding to $(\PP,\beta_2)$.}
  \label{ex2-beta2}
\end{figure}

By lifting the dessin $\cD(\beta_2)$ along $\beta_1$ we obtain the dessin $\cD(\beta_2\circ\beta_1)$ corresponding to the Belyi pair $(\PP,\beta_2\circ\beta_1)$. This amounts to replacing each edge of $\cD(\beta_1)$ by a copy of $\cD(\beta_2)$. Note that the degrees of the black and white vertices are thus multiplied by $mr$ and $nr$, respectively. Analogously to $\mu_0$ we define
\begin{gather*}
\begin{split}
  \mu\coloneqq\MoveEqLeft(\genx^{mrd_1}\geny\genx^s\geny)(\genx^{nrd_2}\geny\genx^s\geny)\cdots(\genx^{mrd_{t-2}}\geny\genx^s\geny)(\genx^{nrd_{t-1}}\geny\genx^s\geny)\\
  &\cdot(\genx^{2mrd_t}\geny\genx^s\geny)(\genx^{nrd_{t-1}}\geny\genx^s\geny)(\genx^{mrd_{t-2}}\geny\genx^s\geny)\cdots(\genx^{nrd_2}\geny\genx^s\geny)\genx^{mrd_1}
\end{split}
\end{gather*}
  and we verify again that $\mu$ fixes the edge $(0,v_1)$. Note also that $\genx^{2s}$ fixes the edge $(0,\gamma\sqrt[p]{q})$. See Figure \ref{ex2-beta21}.

\begin{figure}[H]
  \centering
  \includegraphics[height=3cm]{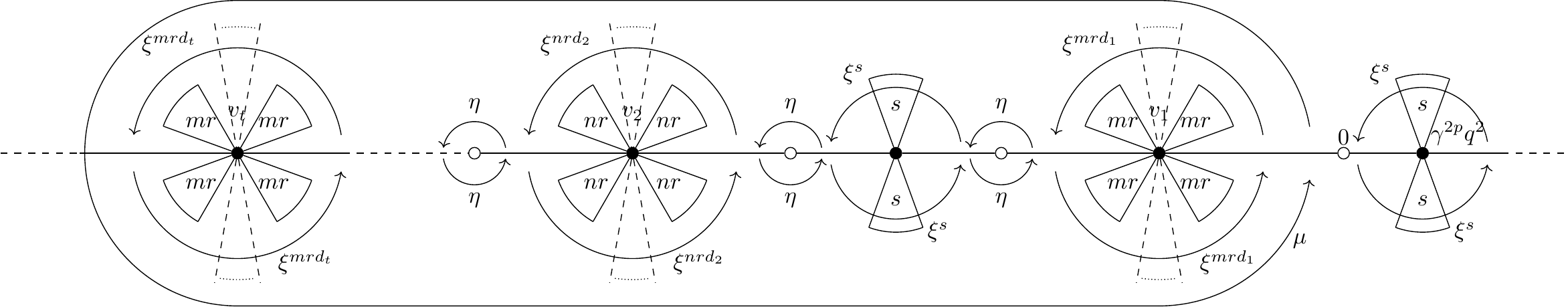}
  \caption{Dessin $\cD(\beta_2\circ\beta_1)$ corresponding to $(\PP,\beta_2\circ\beta_1)$.}
  \label{ex2-beta21}
\end{figure}

Let $\cD_0$ be the dessin corresponding to the Belyi pair $(\PP,\beta_2\circ\beta_1\circ\beta_0)$. To simplify the representations of the dessins we only show the vertices $0$, $\zeta_{2p}^k(1-\zeta_p)$, $1-\zeta_p^k$, and $\zeta_{2p}^k\gamma\sqrt[p]{q}$. We decorate the vertices $\zeta_{2p}^k(1-\zeta_p)$ (which map to $(1-\zeta_p)^{2p}\in\RR_-$ by $\beta_0$) and $\zeta_{2p}^k\gamma\sqrt[p]{q}$ (which map to $\gamma^{2p}q^2\in\RR_+$ by $\beta_0$) respectively with the symbols $\treem$ and $\treep$ to distinguish them. See Figure \ref{ex2-D0}.

\ignore{The dessin $\cD$ corresponding to $(C,\beta)$ will then be obtained by lifting $\cD_0$ to $C$.}

\begin{figure}[H]
  \centering
  \includegraphics[height=8.5cm]{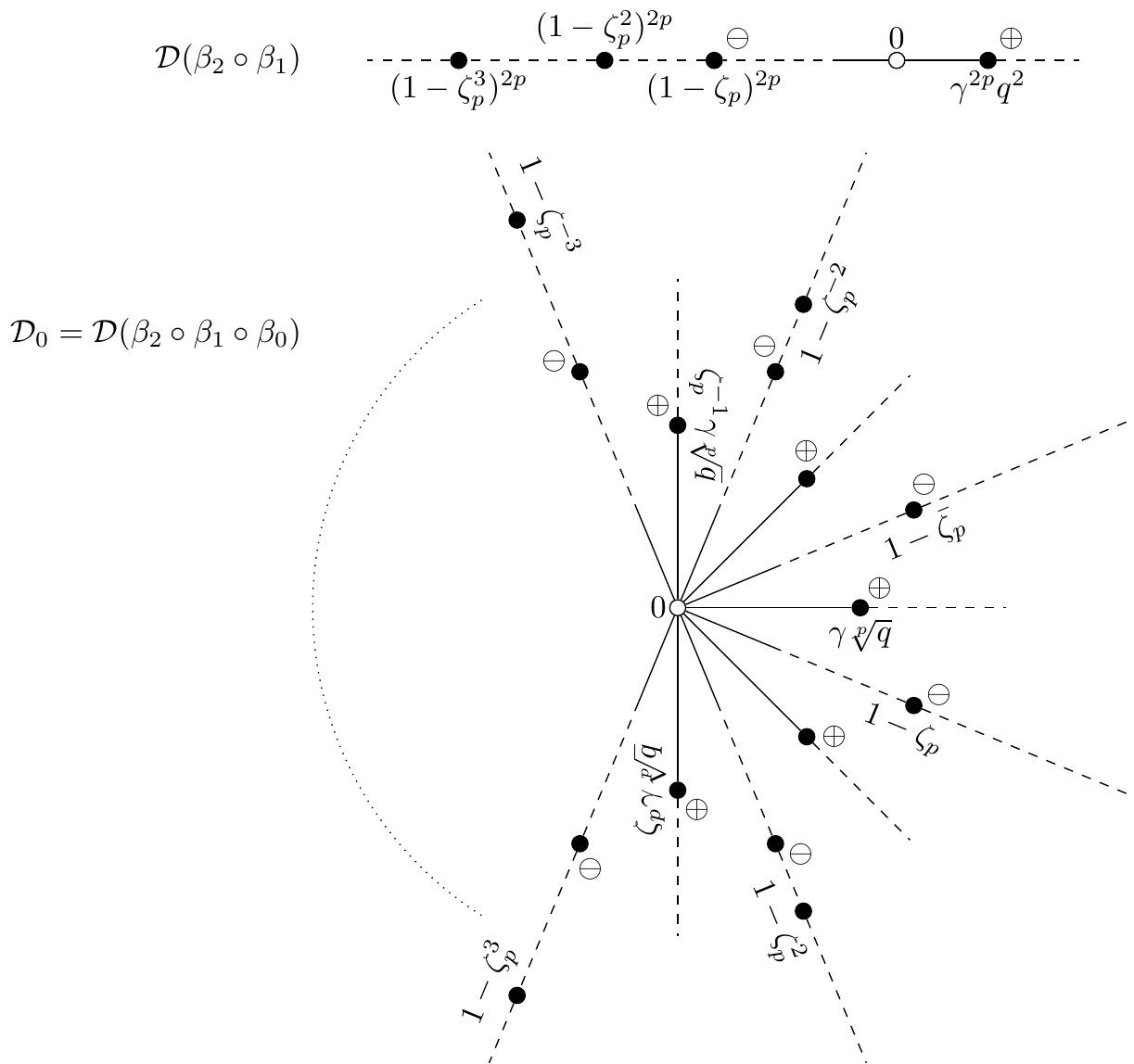}
  \caption{Dessin $\cD_0$ corresponding to $(\PP,\beta_2\circ\beta_1\circ\beta_0)$.}
  \label{ex2-D0}
\end{figure}

We may now draw the Galois conjugates $\cD^\sigma$ for $\sigma\in\AGG$ by lifting the dessin $\cD_0$ along the projection $\pi$, by treating separately the cases $\sigma(\zeta_p)\in\{\zeta_p,\bar{\zeta_p}\}$ and $\sigma(\zeta_p)\in\{\zeta_p^2,\dots,\zeta_p^{p-2}\}$. We call the dessins respectively $\cD_k$ and $\cD_k^j$, see Figure \ref{ex2-D}. We identify the outermost edges on opposite sides in the representations.

\begin{figure}[H]
  \begin{subfigure}{0.5\hsize}
    \centering
    \includegraphics[width=\hsize]{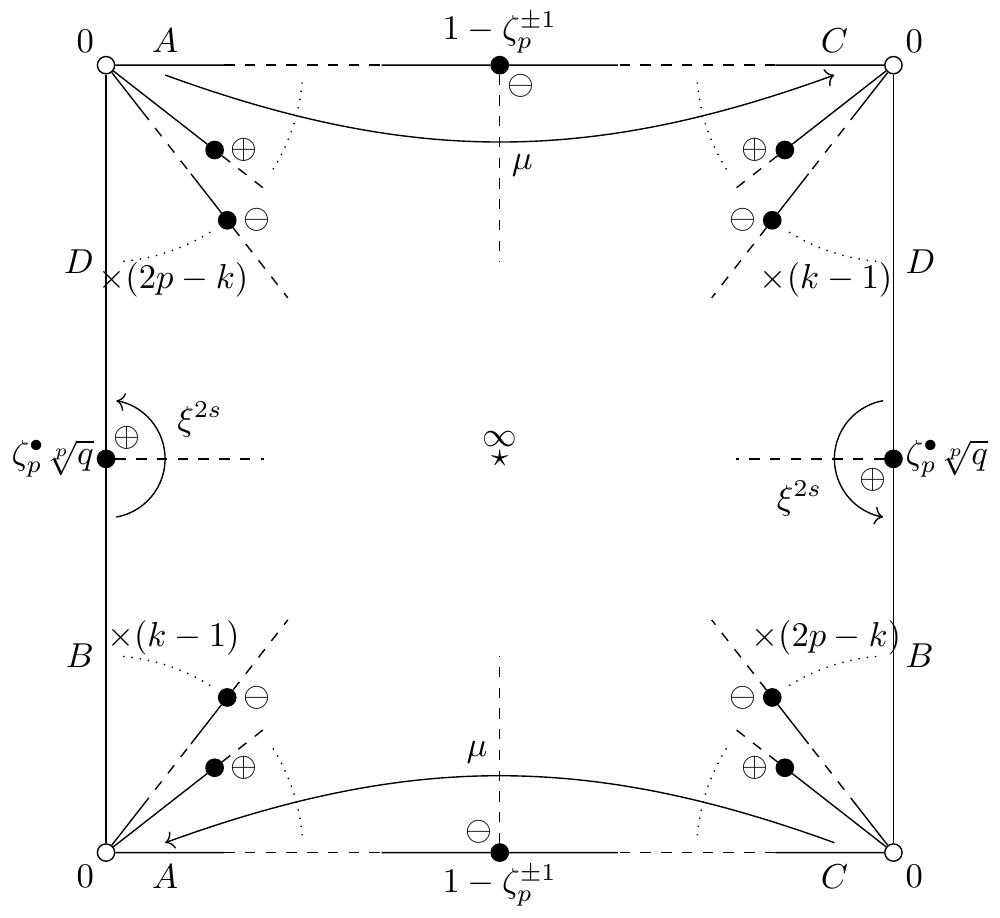}
    \caption{$\cD_k\coloneqq\cD^\sigma$ with $\sigma(\zeta_p)=\zeta_p^{\pm1}$.}
    \label{ex2-Dk}
  \end{subfigure}
  \begin{subfigure}{0.5\hsize}
    \centering
    \includegraphics[width=\hsize]{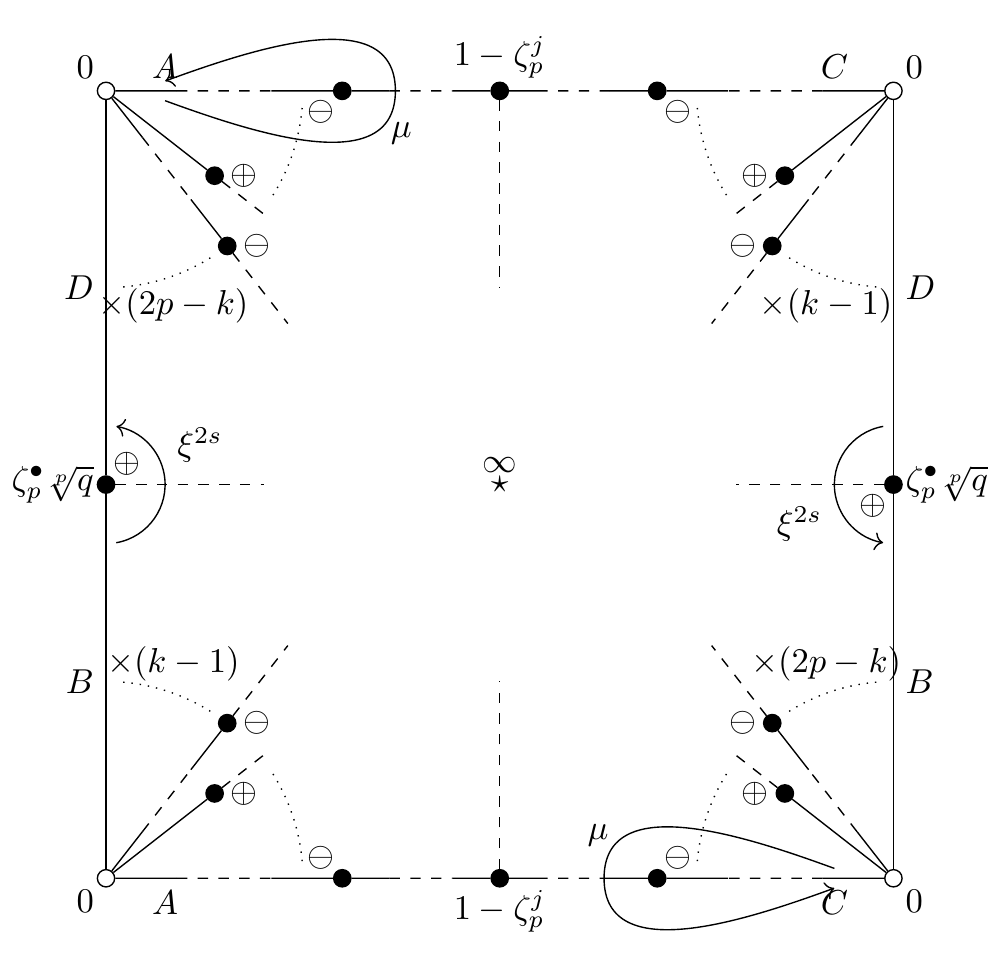}
    \caption{$\cD_k^j\coloneqq\cD^\sigma$ with $\sigma(\zeta_p)=\zeta_p^j$ where $j\in\{2,\dots,p-2\}$.}
    \label{ex2-Dkj}
  \end{subfigure}
  \caption{Dessins (a) $\cD_k$ and (b) $\cD_k^j$ in the Galois orbit of $\cD$.}
  \label{ex2-D}
\end{figure}

For all $k$, we have in fact $\cD_{2k-1}=\cD^\sigma$ where $\sigma\colon(\zeta_p,\sqrt[p]{q})\mapsto(\zeta_p,\zeta_p^{k-1}\sqrt[p]{q})$, and $\cD_{2k}=\cD^\sigma$ where $\sigma\colon(\zeta_p,\sqrt[p]{q})\mapsto(\bar{\zeta_p},\zeta_p^{k}\sqrt[p]{q})$. We have similar expressions for the dessins $\cD_k^j$.

\medskip
\ignore{Let $k$ be fixed, and let us consider one of $\cD_k$ or $\cD_k^j$. Let $E$ denote the set of edges incident to $0$. The action of $\geny$ induces a cyclic permutation of the edges of $E$.}
Let $k$ be fixed, and let us consider the dessin $\cD_k$. Let $A$ denote one of the two edges incident to $0$ and on the path to the ramification point $1-\zeta_p^{\pm1}$. We also call $B\coloneqq A^{\geny^{2k-1}},C\coloneqq A^{4p},D\coloneqq C^{\geny^{2k-1}}$ (See Figure \ref{ex2-Dk}). Let $E$ denote the set of edges incident to $0$. The action of $\geny$ induces the cyclic permutation of the edges of $E=\{A^{\geny^i}\}_{0\leq i<8p}$. Furthermore by construction every white vertex aside from $0$ has degree $1$ or $2$, so $\geny^2$ fixes every edge in the complement of $E$. We can write $E=E^{\treem}\sqcup E^{\treep}$ as the disjoint union of $E^{\treem}\coloneqq\{A^{2i}\}_{0\leq i<4p}$ and $E^{\treep}\coloneqq\{B^{2i}\}_{0\leq i<4p}$, such that $\geny$ sends one to the other. The action of $\mu$ on $E^\treem$ is the transposition $(A,C)$, and similarly the action of $\genx^{2s}$ on $E^\treep$ is the transposition $(B,D)$.

We do the same for the dessins of the form $\cD_k^j$, with the only difference that this time the action of $\mu$ on $E^\treem$ is trivial, including on the edges $A$ and $C$ (see Figure \ref{ex2-Dkj}).

\bigskip
We are almost in the same configuration as in the first example. We define analogously $$\omega\coloneqq \mu\geny \mu^{-1}\genx^{2s}\geny^{-1}\mu,$$ and we shall prove that for some choices of $\gamma$, the actions of $\omega$ and of $\geny^2$ commute only for $\cD_1$. To reproduce the proof in the first example we need only show that for some choice of $\gamma$ the actions of $\mu\geny\mu^{-1}\genx^{2s}$ and $\mu^{-1}\genx^{2s}\geny^{-1}\mu$ on the set $E^\treep$ is the same as that of $\genx^{2s}$.

Note that for any edge $e\in E^\treep$, the edge $e^{\genx^i}$ is fixed by $\geny$ if $i$ is not a multiple of $s$. To that end we shall show that for some choice of $\gamma$ the action of $\mu$ on $E^\treep$ is the same as that of $\genx^\delta$, where $\delta$ is the number of occurences of $\genx$ in the word $\mu$, and then that $\delta$ is not a multiple of $s$.

\medskip
We define the words $\rho_1,\rho'_1,\rho_2,\rho'_2,\dots,\rho_{2t-2},\rho'_{2t-2}\in F_2$ to be the increasing subsequence of the prefixes ending in $\eta$ of the word $\mu$ defined above, such that $\rho_1\coloneqq\genx^{mrd_1}\geny$, $\rho_1'\coloneqq\rho_1\genx^s\geny$, $\rho_2\coloneqq\rho_1'\genx^{nrd_2}\geny$, $\rho_2'\coloneqq\rho_2\genx^s\geny$, etc., and $\mu=\rho_{2t-2}'\genx^{mrd_1}$. We shall show by induction that for some choice of $\gamma$ the action of $\rho_i$ (resp. $\rho_i'$) is the same as the action of $\genx^{\delta_i}$ (resp. $\genx^{\delta_i'}$), where $\delta_i$ (resp. $\delta_i'$) is the number of occurences of $\genx$ in the word $\rho_i$ (resp. $\rho_i'$). By induction it suffices to show that $\delta_i$, $\delta_i'$ are not multiples of $s$. Modulo $s$ we have $\delta_i\equiv\delta_i'$ equal to the non empty partial sum of $$mrd_1+nrd_2+\cdots+mrd_{t-2}+nrd_{t-1}+2mrd_t+nrd_{t-1}+mrd_{t-2}+\cdots+nrd_2+mrd_1$$ consisting of the first $i$ terms.

\bigskip
To proceed we shall use the following result, but let us first introduce some notations. Let $P=\sum_{i=0}^dc_iX^i\in\ZZ[X]$ and $c\in\ZZ_{>0}$ such that $\beta_1=\frac{P}{c}$. Note that $P$ and $c$ do not depend on the choice of $\gamma$, and $0<\beta_1(0)=\frac{P(0)}{c}<1$ so $0<c_0,c-c_0$. We define $$\alpha\coloneqq v_2(\gamma^{2p}q^2),\quad\nu\coloneqq v_2(c_0)+v_2(c-c_0),$$ where $v_2$ denotes the $2$-valuation.

\begin{lemma}
  If $\alpha>\nu$, then there exists $e\in\ZZ$ such that $em\equiv c_0\mod2^\alpha$ and $en\equiv c-c_0\mod2^\alpha$, and $v_2(s)\geq\alpha-\nu$.
 \end{lemma}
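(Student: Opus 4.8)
The plan is to establish the two assertions separately, deducing both from the single observation that $w \coloneqq \gamma^{2p}q^2$ satisfies $v_2(w) = \alpha \geq 1$ (because $\alpha > \nu \geq 0$). Writing $P = \sum_{i=0}^d c_i X^i$, every higher term satisfies $v_2(c_i w^i) = v_2(c_i) + i\alpha \geq \alpha$ for $i \geq 1$, so in the ring $\ZZ_2$ of $2$-adic integers we have the congruences $P(w) \equiv c_0 \pmod{2^\alpha}$ and $c - P(w) \equiv c - c_0 \pmod{2^\alpha}$. These replace the (possibly awkward) rational value $P(w) = c\,\beta_1(w)$ by the integers $c_0$ and $c - c_0$ throughout.

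For the existence of $e$, I would first rewrite $\beta_1(w) = \frac{m}{m+n}$ as the identity $mc = (m+n)P(w)$, i.e. $m\bigl(c - P(w)\bigr) = nP(w)$, and reduce it modulo $2^\alpha$ to obtain the compatibility congruence $m(c - c_0) \equiv n c_0 \pmod{2^\alpha}$. Since $\gcd(m,n) = 1$, at least one of $m, n$ is odd, hence a unit in $\ZZ/2^\alpha\ZZ$; assuming $m$ odd (the case $n$ odd being symmetric) I set $e \coloneqq c_0 m^{-1} \bmod 2^\alpha$, so that $em \equiv c_0$ holds by construction while $en \equiv c_0 m^{-1} n \equiv c - c_0 \pmod{2^\alpha}$ is exactly the compatibility congruence. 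This settles the first half.

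For $v_2(s) \geq \alpha - \nu$, recall $\frac{r}{r+s} = B_{m,n}(c_0/c)$ with $\gcd(r,s)=1$, so $\frac{s}{r+s} = 1 - B_{m,n}(c_0/c)$ and it suffices to prove $v_2\bigl(1 - B_{m,n}(c_0/c)\bigr) \geq \alpha - \nu$: this quantity is then positive, forcing $s$ even, hence $r$ and $r+s$ odd, whence $v_2(s) = v_2\bigl(1 - B_{m,n}(c_0/c)\bigr)$. The geometric input is that $B_{m,n}$ attains its interior maximum value $1$ at $\frac{m}{m+n} = P(w)/c$, so $1 - B_{m,n}$ has a \emph{double} zero there, and $\frac{c_0}{c} - \frac{m}{m+n} = \frac{c_0 - P(w)}{c}$ has $v_2 \geq \alpha - v_2(c)$; the double zero thus contributes valuation at least $2(\alpha - v_2(c))$. \textbf{The main obstacle} is the accompanying constant $-\tfrac12 B_{m,n}''(\tfrac{m}{m+n}) = \tfrac{(m+n)^3}{2mn}$, whose denominator could lower the valuation, so the estimate cannot be read off from the double zero alone.

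To control this I would substitute $c_0 = em + 2^\alpha s_1$ and $c - c_0 = en + 2^\alpha s_2$ (whence $c = e(m+n) + 2^\alpha(s_1 + s_2)$), which turns $B_{m,n}(c_0/c)$ into $\dfrac{(1 + u_1)^m (1 + u_2)^n}{(1 + U)^{m+n}}$ with $u_1 = \tfrac{2^\alpha s_1}{em}$, $u_2 = \tfrac{2^\alpha s_2}{en}$, $U = \tfrac{2^\alpha(s_1+s_2)}{e(m+n)}$. Each of $c_0/(em)$, $(c-c_0)/(en)$, $c/(e(m+n))$ is a principal $2$-adic unit, so $u_1, u_2, U$ have positive valuation and the $2$-adic logarithm converges. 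In $\log B_{m,n}(c_0/c) = m\log(1+u_1) + n\log(1+u_2) - (m+n)\log(1+U)$ the linear terms cancel identically, and using $\tfrac{s_1^2}{m} + \tfrac{s_2^2}{n} - \tfrac{(s_1+s_2)^2}{m+n} = \tfrac{(ns_1 - ms_2)^2}{mn(m+n)}$ the quadratic term is $-\tfrac{2^{2\alpha}(ns_1 - ms_2)^2}{2e^2 mn(m+n)}$, all higher terms having strictly larger valuation. The key point that neutralises the constant is that $e$ cancels in $ns_1 - ms_2 = \tfrac{(m+n)c_0 - mc}{2^\alpha} = \tfrac{(m+n)(c_0 - P(w))}{2^\alpha}$, giving $v_2(ns_1 - ms_2) \geq v_2(m+n)$. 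Combining this with $2v_2(e) + v_2(mn) = \nu$ (from $v_2(c_0) = v_2(em)$ and $v_2(c - c_0) = v_2(en)$, valid since $v_2(c_0), v_2(c-c_0) \leq \nu < \alpha$), the valuations of $m, n, m+n$ and $e$ telescope and yield $v_2\bigl(1 - B_{m,n}(c_0/c)\bigr) \geq 2\alpha + v_2(m+n) - 1 - \nu \geq \alpha - \nu$, as desired.
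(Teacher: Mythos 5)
Your first half (the construction of $e$) is correct and is in substance the paper's own argument: the paper clears denominators by writing $\gamma^{2p}q^2=\frac{a}{b}2^\alpha$ and introducing $f$ with $fm=\sum_ic_ia^i2^{\alpha i}b^{d-i}$ and $f(m+n)=b^dc$, which yields exactly your congruences $em\equiv c_0$, $en\equiv c-c_0\pmod{2^\alpha}$. For the second half the paper proceeds quite differently: it expands $(b^dP(\frac{a}{b}2^\alpha))^m(b^dc-b^dP(\frac{a}{b}2^\alpha))^n$ binomially around $b^{d(m+n)}c_0^m(c-c_0)^n$, reads off $gs=C\,2^{\alpha+(m-1)v_2(c_0)+(n-1)v_2(c-c_0)}$ against $gr=b^{d(m+n)}c_0^m(c-c_0)^n$, and compares valuations; everything stays inside $\ZZ$ and no analytic device is needed. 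Your route through the $2$-adic logarithm is more conceptual (the cancellation of the linear terms is your ``double zero''), but it contains a step that is not valid as stated.

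The gap is the final passage from $v_2(\log B)$ to $v_2(1-B)$, where $B=B_{m,n}(c_0/c)$. The $2$-adic logarithm kills the torsion $\{\pm1\}$ of the unit group: if $B\equiv-1\pmod{2^k}$ with $k\geq2$, then $v_2(\log B)=v_2(\log(-B))\geq k$ while $v_2(1-B)=1$. So your estimate on $v_2(\log B)$ only bounds $\min\bigl(v_2(1-B),v_2(1+B)\bigr)$ from below, and you must separately exclude $B\equiv3\pmod4$. This can be repaired --- for $\alpha-\nu\geq2$ one has $v_2(u_1),v_2(u_2)\geq\alpha-\nu\geq2$, so the numerator lies in $1+4\ZZ_2$, and a short case analysis on the parity of $m+n$ shows $(1+U)^{m+n}\in1+4\ZZ_2$ as well, while the case $\alpha-\nu=1$ is trivial since $B$ is a unit --- but none of this is in your write-up, and it is a missing idea rather than a formality. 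A secondary issue: ``all higher terms having strictly larger valuation'' is asserted without proof, and when $v_2(U)=1$ (which can occur, since $v_2(c)$ may exceed $\nu$) the individual terms $(m+n)U^j/j$ require their own estimate; the weaker bound $\geq\alpha-\nu$ per term does go through via $(j-1)(\alpha-\nu)\geq v_2(j)$, but that verification belongs in the proof. The cleanest fix is to drop the logarithm entirely and bound $1-B=\bigl((1+U)^{m+n}-(1+u_1)^m(1+u_2)^n\bigr)/(1+U)^{m+n}$ by the finite binomial expansion of the numerator, which is essentially the paper's computation.
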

 \begin{proof}
   Let $a,b\in\ZZ$ coprime such that $\gamma^{2p}q^2=\frac{a}{b}2^\alpha$.

   Firstly, $$\frac{m}{m+n}=\beta_1(\frac{a}{b}2^\alpha)=\frac{P(\frac{a}{b}2^\alpha)}{c}=\frac{\sum_{i=0}^dc_ia^i2^{\alpha i}b^{d-i}}{b^dc},$$ so there exists $f\in\ZZ$ such that $fm=\sum_{i=0}^dc_ia^i2^{\alpha i}b^{d-i}$ and $f(m+n)=b^dc$, so $$em\equiv c_0\mod2^\alpha,\quad en\equiv c-c_0\mod2^\alpha$$ for $e\in\ZZ$ such that $eb^d\equiv f\mod2^\alpha$.

   Secondly, $$\frac{r}{r+s}=B_{m,n}(\beta_1(0))=\frac{\beta_1(0)^m(1-\beta_1(0))^n}{\beta_1(\frac{a}{b}2^\alpha)^m(1-\beta_1(\frac{a}{b}2^\alpha))^n}=\frac{b^{d(m+n)}c_0^m(c-c_0)^n}{(b^dP(\frac{a}{b}2^\alpha))^m(b^dc-b^dP(\frac{a}{b}2^\alpha))^n},$$ so there exists $g\in\ZZ$ such that $gr=b^{d(m+n)}c_0^m(c-c_0)^n$ and $g(r+s)=(b^dP(\frac{a}{b}2^\alpha))^m(b^dc-b^dP(\frac{a}{b}2^\alpha))^n$. In the expansion of $(b^dP(\frac{a}{b}2^\alpha))^m$, aside from the constant term $b^{dm}c_0^m$, every other term is a multiple of an integer of the form $c_0^i2^{\alpha j}$ with $i\leq m-1$ and $j\geq m-i$. By hypothesis $\alpha>\nu\geq v_2(c_0)$, so those other terms are all multiples of $2^{\alpha+(m-1)v_2(c_0)}$, hence there exists $A\in\ZZ$ such that $(b^dP(\frac{a}{b}2^\alpha))^m=b^{dm}c_0^m+A2^{\alpha+(m-1)v_2(c_0)}$. Similarly there exists $B\in\ZZ$ such that $(b^dc-b^dP(\frac{a}{b}2^\alpha))^n=b^{dn}(c-c_0)^n+B2^{\alpha+(n-1)v_2(c-c_0)}$. Then $g(r+s)=b^{d(m+n)}c_0^m(c-c_0)^n+C2^{\alpha+(m-1)v_2(c_0)+(n-1)v_2(c-c_0)}$ for some $C\in\ZZ$, so $gr=b^{d(m+n)}c_0^m(c-c_0)^n$ and $gs=C2^{\alpha+(m-1)v_2(c_0)+(n-1)v_2(c-c_0)}$. The integers $r$ and $s$ are coprime, so after dividing $gr$ and $gs$ by their greatest common dividor we obtain that $$v_2(s)\geq \alpha-\nu>0.$$
 \end{proof}

 Using this lemma, we know that if $\alpha>\nu$, then there exists $e\in\ZZ$ such that $em\equiv c_0\mod2^\alpha$ and $en\equiv c-c_0\mod2^\alpha$, $v_2(s)\geq\alpha-\nu$ where $\nu$ does not depend on $\gamma$, and $r$ is coprime to $s$ so is not a multiple of $2$. Therefore there exists $e'\in\ZZ$ such that $e'mr\equiv c_0\mod2^\alpha$ and $e'nr\equiv c-c_0\mod2^\alpha$. Moreover $2^{\alpha-\nu}$ is a common divisor of $2^\alpha$ and $s$, so by the above modulo $2^{\alpha-\nu}$ we have $e'\delta_i\equiv e'\delta_i'$ equal to the non empty partial sum $\widetilde{\delta_i}$ consisting of the first $i$ terms of the sum $$c_0d_1+(c-c_0)d_2+\cdots+c_0d_{t-2}+(c-c_0)d_{t-1}+2c_0d_t+(c-c_0)d_{t-1}+c_0d_{t-2}+\cdots+(c-c_0)d_2+c_0d_1.$$ Similarly $e'\delta$ is equal modulo $2^{\alpha-\nu}$ to the whole sum $$\widetilde{\delta}\coloneqq2(c_0d_1+(c-c_0)d_2+\cdots+c_0d_{t-2}+(c-c_0)d_{t-1}+c_0d_t).$$

 By construction $c_0,c-c_0,d_i$ are positive and do not depend on the choice of $\gamma$, so $0<c_0d_1\leq\widetilde{\delta}_i\leq\widetilde{\delta}$, thus for any choice of $\gamma$ such that $\alpha>\nu$ and $\widetilde{\delta}<2^{\alpha-\nu}$ (for instance $\gamma=\frac{2^u}{2^v+1}$ with $1\ll u\ll v$), we obtain $\widetilde{\delta}_i,\widetilde{\delta}\not\equiv 0\mod 2^{\alpha-\nu}$, and in consequence $\delta_i,\delta_i'$ and $\delta$ are not multiples of $s$. Therefore we can now conclude by induction that the actions of $\rho_i$ and $\rho_i'$ are the same as that of $\genx^{\delta_i}$ and $\genx^{\delta_i'}$, respectively. Indeed, $\delta_1$ is not a multiple of $s$ so $\rho_1=\genx^{\delta_1}\geny$ and $\genx^{\delta_1}$ have the same action on $E^\treep$. If $\rho_i$ has the same action as $\genx^{\delta_i}$ on $E^\treep$, then $\rho_i'=\rho_i\genx^s\geny$ has the same action as $\genx^{\delta_i}\genx^s\geny=\genx^{\delta_i'}\geny$ on $E^\treep$, and also the same action as $\genx^{\delta_i'}$ because $\delta_i'$ is not a multiple of $s$. Similarly, if $\rho_i'$ has the same action as $\genx^{\delta_i'}$ on $E^\treep$, then $\rho_{i+1}$ has the same action as $\genx^{\delta_{i+1}}\geny$ on $E^\treep$, and also the same action as $\genx^{\delta_{i+1}}$ because $\delta_{i+1}$ is not a multiple of $s$.

\medskip
We have thus proved that $\mu$ has the same action as $\genx^{\delta}$ on $E^\treep$, and by symmetry $\mu^{-1}$ has the same action as $\genx^{-\delta}$ on $E^\treep$. And $\delta$ and $2s-\delta$ are not multiples of $s$, so $\mu\geny\mu^{-1}\genx^{2s}$ and $\mu^{-1}\genx^{2s}\geny^{-1}\mu$ have the same action as $\genx^{2s}$ on $E^\treep$, as announced. We shall now observe the action of $\omega=\mu\geny\mu^{-1}\genx^{2s}\geny^{-1}\mu$ on $E$. Let $M_k$ and $M_k^j$ denote the monodromy maps of the dessins $\cD_k$ and $\cD_k^j$.

\medskip
For the dessins $\cD_k$ for $1\leq k\leq 2p$, the action of $\mu$ on $E^\treem$ is the transposition $(A,C)$, and the action of $\genx^{2s}$ on $E^\treep$ is the transposition $(B,D)$, therefore the action of $\omega$ fixes the set $E$ on which it induces the permutation $$M_k(\omega)|_E=(A,C)(B,D)\cdot\prod_{i=0}^{4p-1}(A^{\geny^{2i}},A^{\geny^{2i+1}})\cdot(A,C)(B,D).$$
Hence for $k=1$,
\begin{align*}
  M_1(\omega)|_E&=(A,A^{\geny^{4p}})(A^\geny,A^{\geny^{4p+1}})\cdot\prod_{i=0}^{4p-1}(A^{\geny^{2i}},A^{\geny^{2i+1}})\cdot(A,A^{\geny^{4p}})(A^\geny,A^{\geny^{4p+1}})\\
  &=\prod_{i=0}^{4p-1}(A^{\geny^{2i}},A^{\geny^{2i+1}})
\end{align*}
so $\omega$ and $\geny^2$ commute on $E$. Moreover $\geny^2$ acts trivially on the complement of $E$, so finally $M_1(\omega)$ and $M_1(\geny^2)$ commute.

\medskip
For $k=2$, we observe that $B^{\omega\geny^2}=D^{\geny^{-1}\geny^2}=D^{\geny}$ but $B^{\geny^2\omega}=B^{\geny^2\geny^{-1}}=B^{\geny}$. Similarly, for $3\leq k\leq 2p$, we observe that $A^{\omega\geny^2}=C^{\geny\geny^2}=C^{\geny^3}$ but $A^{\geny^2\omega}=A^{\geny^2\geny}=A^{\geny^3}$. Therefore $M_k(\omega)$ and $M_k(\geny^2)$ do not commute for $2\leq k\leq 2p$.

\medskip
For the dessins $\cD_k^j$ for $1\leq k\leq 2p$ and $2\leq j\leq \frac{p-1}{2}$, $\genx^{2s}$ on $E^\treep$ is the transposition $(B,D)$, and $\mu$ acts trivially on $E^\treem$, therefore the action of $\omega$ fixes the set $E$ on which it induces the permutation $$M_k^j(\omega)|_E=(B,D)\cdot\prod_{i=0}^{4p-1}(A^{\geny^{2i}},A^{\geny^{2i+1}})\cdot(B,D).$$
Hence we observe that $B^{\omega\geny^2}=D^{\geny^{-1}\geny^2}=D^{\geny}$ but $B^{\geny^2\omega}=B^{\geny^2\geny^{-1}}=B^{\geny}$, so $M_k^j(\omega)$ and $M_k^j(\geny^2)$ do not commute.

\medskip
We have thus shown that the actions of $\omega$ and $\geny^2$ commute only for $\cD_1$, this concludes the proof that $\widetilde{\cD}$ is a regular dessin with moduli field $\QQ(\zeta_p,\sqrt[p]{q})$.

\subsection{Regular dessin with moduli field $\QQ(\zeta_3,\sqrt[3]{3})$}\label{ex3}
Finally, let us exhibit a regular dessin with moduli field $\QQ(\zeta_3,\sqrt[3]{3})$ of smaller degree by choosing a Belyi map that is a rational function instead of a polynomial as was done in the previous subsections. Let
\begin{align*}
  C&\colon y^2=x(x-(1-\zeta_3))(x-\sqrt[3]{3}),\\
  \beta&\colon C\to\PP,(x,y)\mapsto\frac{(x+3^3)^3}{3^5(x-3^2)^2}.
\end{align*}

The function $\beta$ is given by the composition of the following maps $\beta=\beta_1\circ\beta_0\circ\pi$.
\begin{enumerate}
\item $\pi\colon C\to\PP$ is the projection on the coordinate $x$, which ramifies over $\{0,1-\zeta_3,\sqrt[3]{3},\infty\}$.
\item $\beta_0\coloneqq X^6\in\QQ[X]$, $\Crit(\beta_0)=\{0\}$ so $\beta_0\circ\pi$ ramifies over $\{0,(1-\zeta_3)^6=-3^3,3^2,\infty\}$.
\item $\beta_1\coloneqq\frac{(X+3^3)^3}{3^5(X-3^2)^2}$, $\Crit(\beta_1)=\{0,1\}$ so $\beta=\beta_1\circ\beta_0\circ\pi$ ramifies over $\{0,1,\infty\}$.
\end{enumerate}

The pair $(C,\beta)$ is thus a Belyi pair, and we call $\cD$ the dessin corresponding to $(C,\beta)$. Similarly as in \ref{ex1}, $\cD$ has moduli field $\QQ(\zeta_3,\sqrt[3]{3})$. We will proceed analogously to show that the regular closure $\widetilde{\cD}$ has the same field of moduli. Let us first draw the dessin $\cD_0$ corresponding to the Belyi pair $(\PP,\beta_1\circ\beta_0)$ (see Figure \ref{D0-ex3}), and lift it to the conjugate curves $C^\sigma$ to obtain the conjugate dessins $\cD^\sigma$ for $\sigma\in\Gal(\QQ(\zeta_3,\sqrt[3]{3})/\QQ)$ (see Figure \ref{ex3-dessins}).

\begin{figure}[H]
  \centering
  \includegraphics[width=6cm]{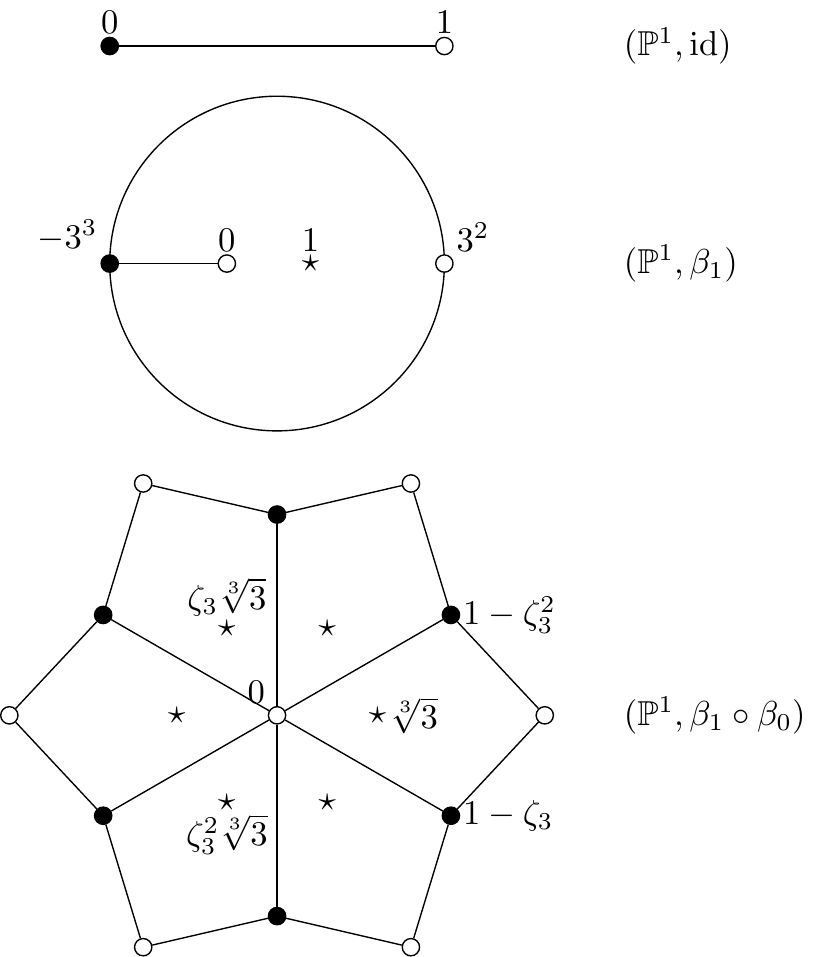}
  \caption{Construction of $\cD_0$.}
  \label{D0-ex3}
\end{figure}

As usual we identify the outermost edges on opposite sides.

\begin{figure}[H]
 \begin{subfigure}{0.5\hsize}
  \begin{center}
    \includegraphics[height=0.9\hsize]{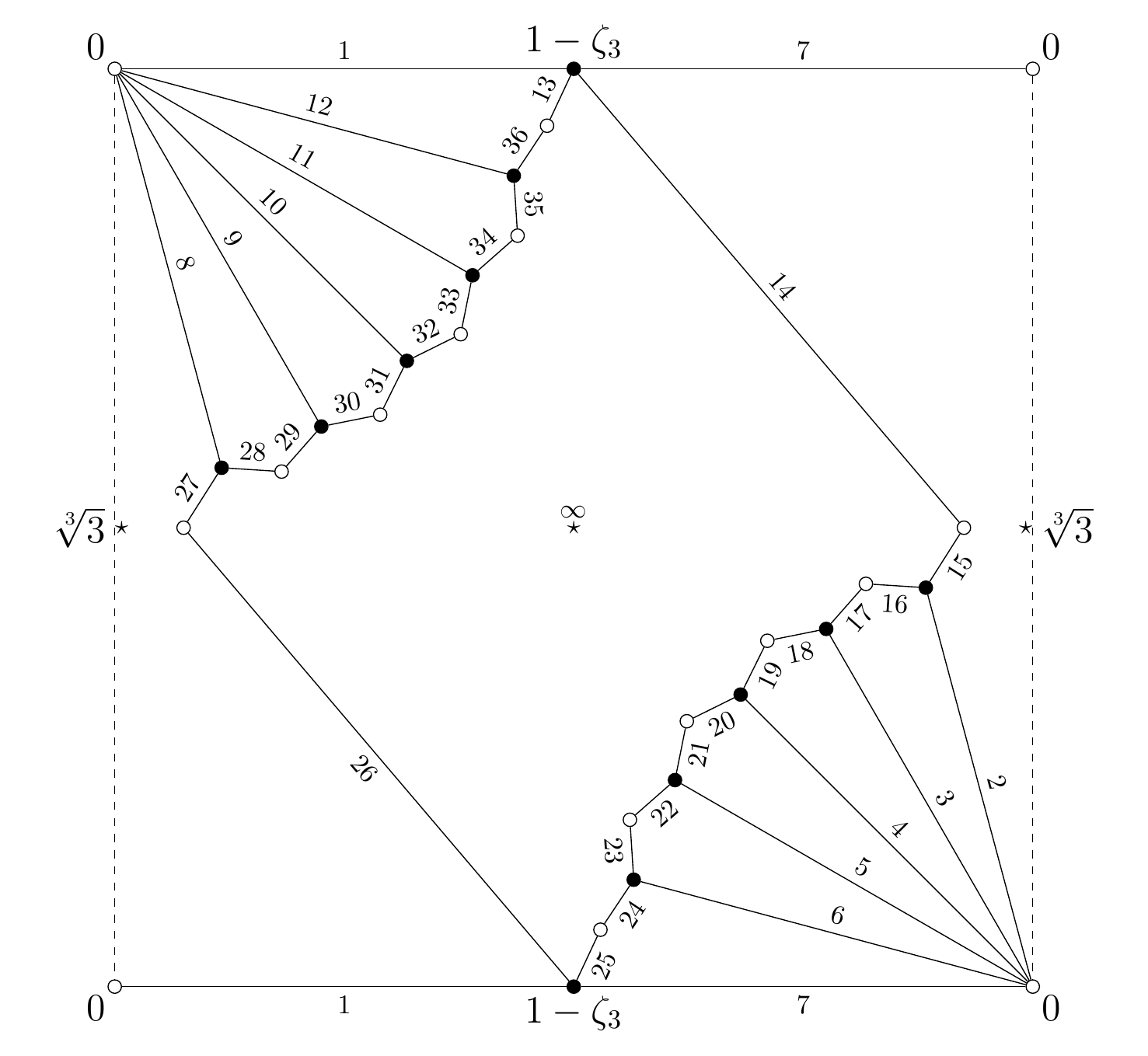}
  \end{center}
  \caption{$\cD_1\coloneqq\cD$}
  \label{ex3-1}
 \end{subfigure}
 \begin{subfigure}{0.5\hsize}
  \begin{center}
    \includegraphics[height=0.9\hsize]{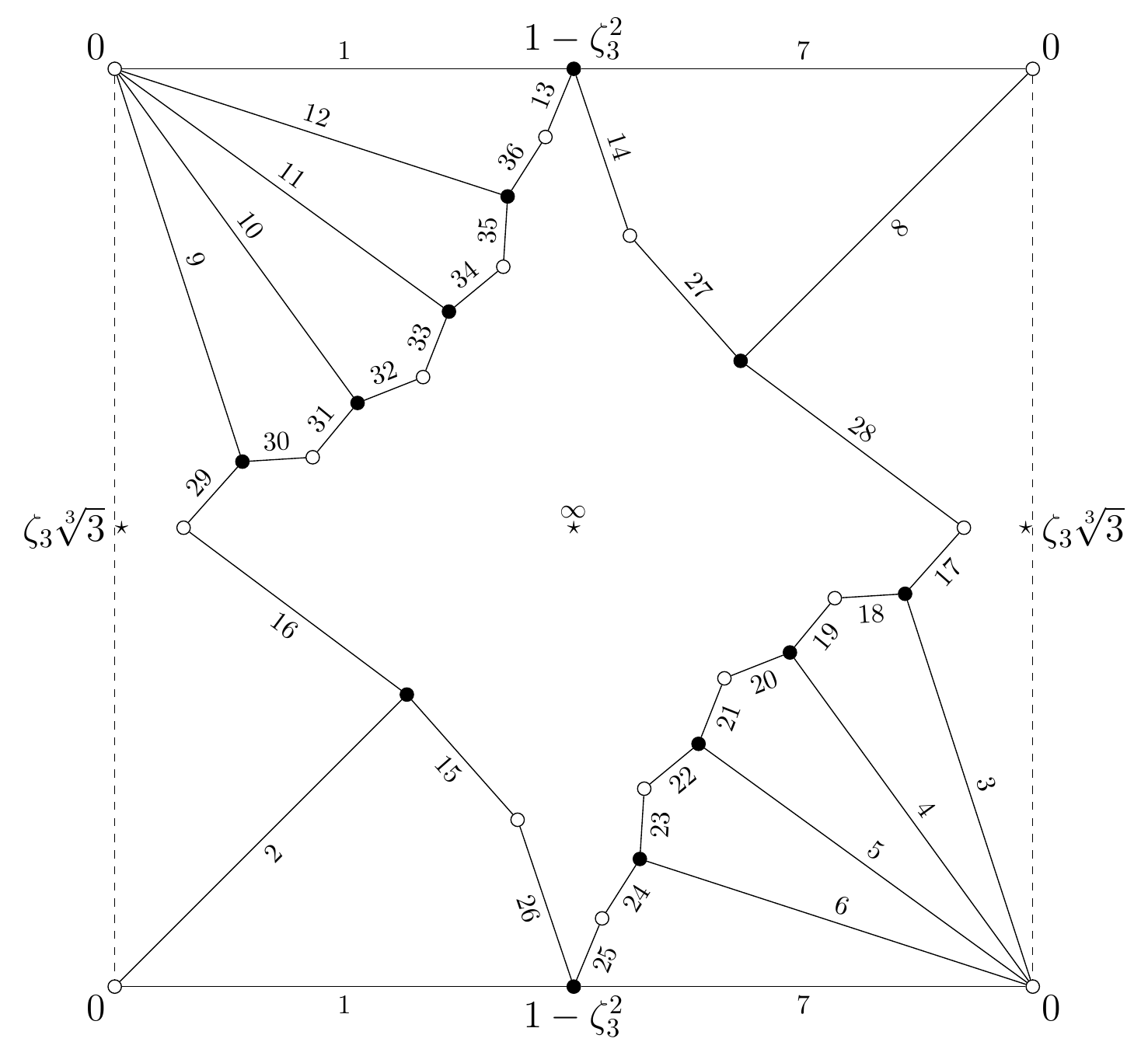}
  \end{center}
  \caption{$\cD_2\coloneqq\cD^\sigma,\sigma\colon(\zeta_3,\sqrt[3]{q})\mapsto(\zeta_3^2,\zeta_3\sqrt[3]{q})$}
 \end{subfigure}
\end{figure}

\begin{figure}[H]\ContinuedFloat
 \begin{subfigure}{0.5\hsize}
  \begin{center}
    \includegraphics[height=0.9\hsize]{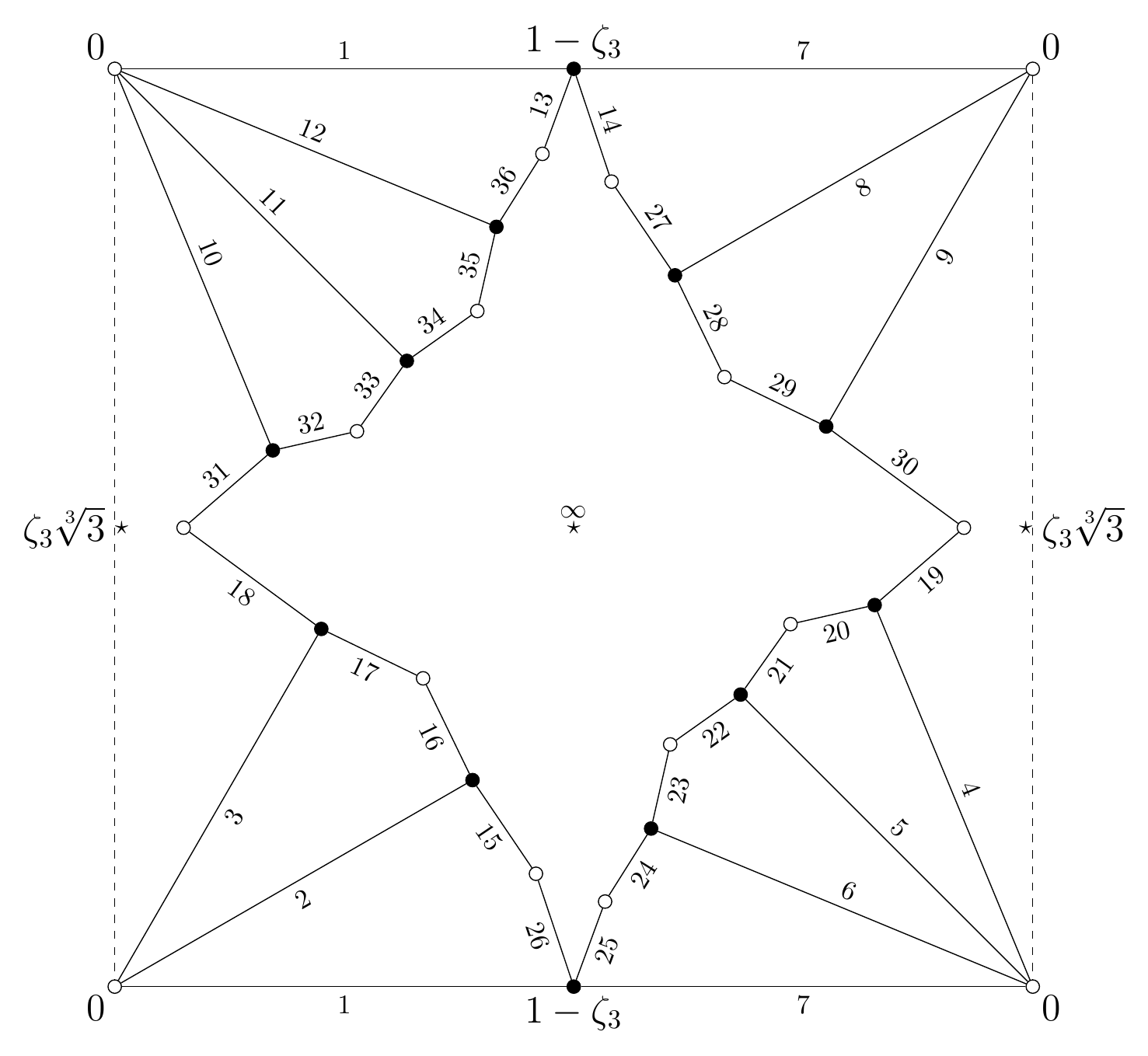}
  \end{center}
  \caption{$\cD_3\coloneqq\cD^\sigma,\sigma\colon(\zeta_3,\sqrt[3]{q})\mapsto(\zeta_3,\zeta_3\sqrt[3]{q})$}
 \end{subfigure}
 \begin{subfigure}{0.5\hsize}
  \begin{center}
    \includegraphics[height=0.9\hsize]{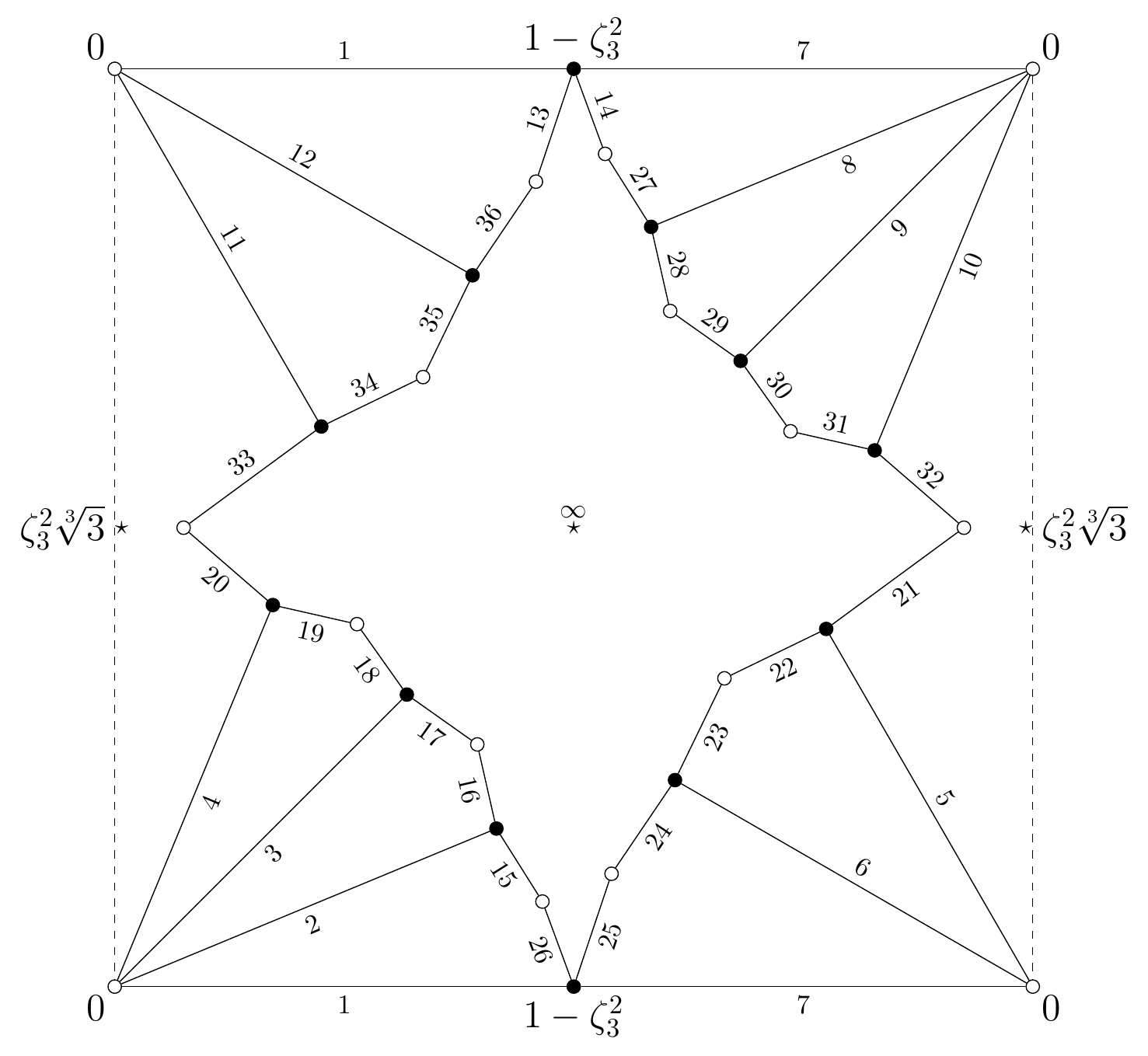}
  \end{center}
  \caption{$\cD_4\coloneqq\cD^\sigma,\sigma\colon(\zeta_3,\sqrt[3]{q})\mapsto(\zeta_3^2,\zeta_3^2\sqrt[3]{q})$}
 \end{subfigure}
\end{figure}

\begin{figure}[H]\ContinuedFloat
 \begin{subfigure}{0.5\hsize}
  \begin{center}
    \includegraphics[height=0.9\hsize]{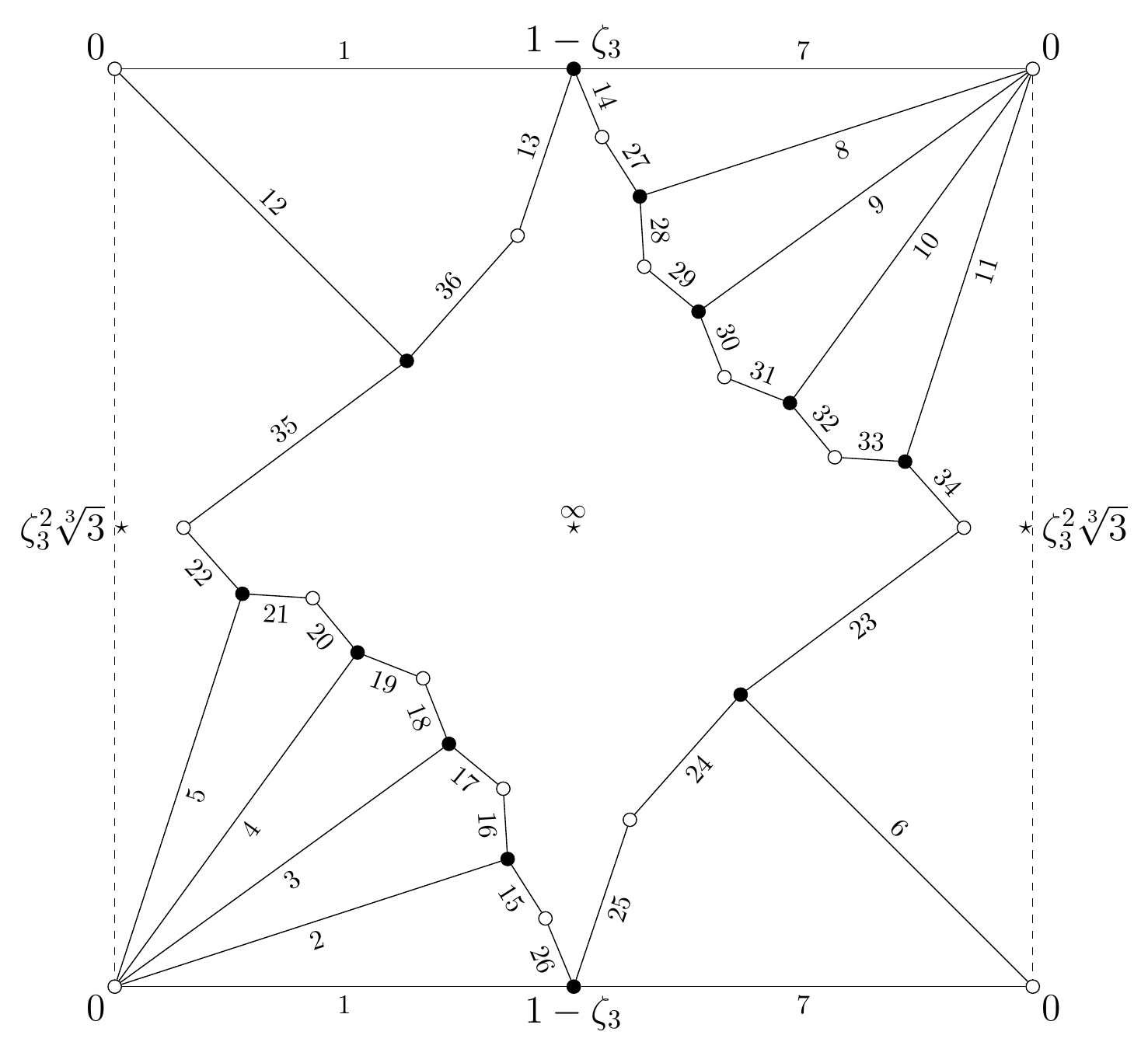}
  \end{center}
  \caption{$\cD_5\coloneqq\cD^\sigma,\sigma\colon(\zeta_3,\sqrt[3]{q})\mapsto(\zeta_3,\zeta_3^2\sqrt[3]{q})$}
 \end{subfigure}
 \begin{subfigure}{0.5\hsize}
  \begin{center}
    \includegraphics[height=0.9\hsize]{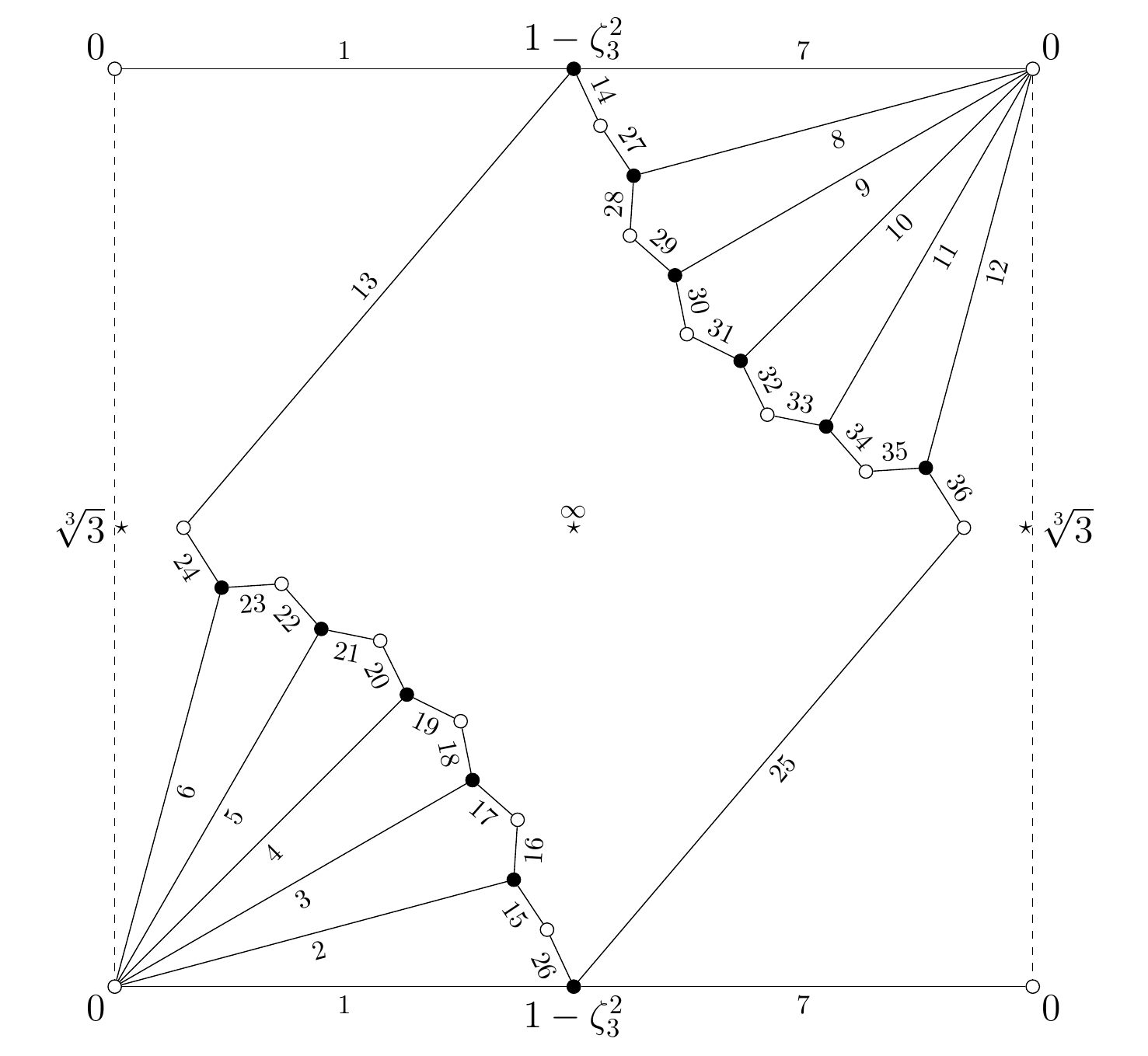}
  \end{center}
  \caption{$\cD_6\coloneqq\cD^\sigma,\sigma\colon(\zeta_3,\sqrt[3]{q})\mapsto(\zeta_3^2,\sqrt[3]{q})$}
  \label{ex3-6}
\end{subfigure}
\caption{Dessins $\cD_1,\dots,\cD_6$ in the Galois orbit of $\cD$.}
\label{ex3-dessins}
\end{figure}

We can now compute the cartographic groups of the dessins. Let $M_k$ denote the monodromy map of $\cD_k$. Then
$$M_k(\genx)=\begin{tabular}{c}
           (1,13,14,7,25,26)(2,15,16)(3,17,18)(4,19,20)(5,21,22)\\
           (6,23,24)(8,27,28)(9,29,30)(10,31,32)(11,33,34)(12,35,36)
         \end{tabular}$$
for all $1\leq k\leq6$, and
\begin{itemize}
\item $M_1(\geny)=\begin{tabular}{c}
                (1,2,3,4,5,6,7,8,9,10,11,12)(13,36)(14,15)(16,17)(18,19)\\
                (20,21)(22,23)(24,25)(26,27)(28,29)(30,31)(32,33)(34,35)
          \end{tabular}$,
\item $M_2(\geny)=\begin{tabular}{c}
                (1,2,3,4,5,6,7,8,9,10,11,12)(13,36)(14,27)(15,26)(16,29)\\
                (17,28)(18,19)(20,21)(22,23)(24,25)(30,31)(32,33)(34,35)
          \end{tabular}$,
\item $M_3(\geny)=\begin{tabular}{c}
                (1,2,3,4,5,6,7,8,9,10,11,12)(13,36)(14,27)(15,26)(16,17)\\
                (18,31)(19,30)(20,21)(22,23)(24,25)(28,29)(32,33)(34,35)
          \end{tabular}$,
\item $M_4(\geny)=\begin{tabular}{c}
                (1,2,3,4,5,6,7,8,9,10,11,12)(13,36)(14,27)(15,26)(16,17)\\
                (18,19)(20,33)(21,32)(22,23)(24,25)(28,29)(30,31)(34,35)
          \end{tabular}$,
\item $M_5(\geny)=\begin{tabular}{c}
                (1,2,3,4,5,6,7,8,9,10,11,12)(13,36)(14,27)(15,26)(16,17)\\
                (18,19)(20,21)(22,35)(23,34)(24,25)(28,29)(30,31)(32,33)
          \end{tabular}$,
\item $M_6(\geny)=\begin{tabular}{c}
                (1,2,3,4,5,6,7,8,9,10,11,12)(13,24)(14,27)(15,26)(16,17)\\
                (18,19)(20,21)(22,23)(25,36)(28,29)(30,31)(32,33)(34,35)
          \end{tabular}$.
\end{itemize}

Using the computer algebra system SageMath \cite{sagemath}, we determined that $$|\langle M_1(\genx),M_1(\geny)\rangle|=42467328=2^{19}\cdot3^4.$$ Moreover, $M_1(\genx)$, $M_1(\geny)$ and $M_1(\genx\geny)$ respectively have orders $6$, $12$ and $12$, so the Euler characteristic of the underlying surface of $\widetilde{\cD_1}$ is $\chi=|\langle M_1(\genx),M_1(\geny)\rangle|\cdot(\frac{1}{\ord M_1(\genx)}+\frac{1}{\ord M_1(\geny)}+\frac{1}{\ord M_1(\genx\geny)}-1)=-28311552=-2^{20}\cdot3^3$, and its genus is $g=1-\frac{\chi}{2}=14155777$.

\medskip
We will now show that $\widetilde{\cD_1}$ is not isomorphic to $\widetilde{\cD_2},\dots,\widetilde{\cD_6}$. We claim that $\omega\coloneqq[\genx^{-1}\geny^2\genx,\genx\geny]\in\ker M_1\setminus\bigcup_{2\leq k\leq6}\ker M_k$, thus concluding the proof. Indeed, we obtain:
\begin{itemize}
\item $M_1(w)=\id$;
\item $M_2(w)=(13,25)(15,27)(21,33)(23,35)$;
\item $M_3(w)=(17,29)(21,33)$;
\item $M_4(w)=(13,25)(15,27)(19,31)(21,33)$;
\item $M_5(w)=(13,25)(17,29)$;
\item $M_6(w)=(13,25)(19,31)(21,33)(23,35)$.
\end{itemize}

We have thus constructed a regular dessin $\widetilde{\cD}$ of degree $2^{19}\cdot3^4$ and genus $14155777$ with moduli field $\QQ(\zeta_3,\sqrt[3]{3})$.

\bibliographystyle{plain}
\bibliography{ref-final}

\end{document}